\documentclass[11pt,a4paper,reqno]{amsart}
\usepackage{amsmath,amssymb,amsfonts,epsfig,mathrsfs}
\usepackage[T1]{fontenc}

\usepackage{color}
\usepackage{array}
\usepackage{amsthm}
\usepackage{amstext}
\usepackage{graphicx}
\usepackage{setspace}
\usepackage[margin=2.5cm]{geometry}
\usepackage{bbm}
\usepackage{color}
\usepackage{enumitem}
\usepackage{undertilde}
\setstretch{1.2}

\usepackage{amscd,psfrag}
\usepackage{yhmath}
\usepackage[mathscr]{eucal}

\usepackage{slashed}

\makeatletter
\pdfpageheight\paperheight
\pdfpagewidth\paperwidth

\setlength{\parindent}{0pt}
\setlength{\parskip}{4pt}
\usepackage{epstopdf}
\usepackage{chngcntr}
\counterwithin{figure}{section}
\usepackage{mathrsfs}

\setlength{\parindent}{28pt}
\usepackage{indentfirst}	

\usepackage[normalem]{ulem}
\theoremstyle{plain}

\newtheorem{definition}{Definition}[section]
\newtheorem{theorem}[definition]{Theorem}
\newtheorem*{theorem*}{Theorem}

\newtheorem{remark}[definition]{Remark}

\newtheorem*{remark*}{Remark}
\newtheorem*{sideremark*}{Side Remark}

\newtheorem*{claim*}{Claim}
\newtheorem*{q*}{Question}
\newtheorem{lemma}[definition]{Lemma}
\newtheorem{corollary}[definition]{Corollary}
\newtheorem*{corollary*}{Corollary}

\newtheorem{proposition}[definition]{Proposition}

\newcommand{\R}{\mathbb{R}}
\newcommand{\na}{\nabla}
\newcommand{\lie}{\mathcal{L}}
\newcommand{\dd}{{\rm d}}
\newcommand{\dx}{\,\dd x}
\newcommand{\dt}{\,\dd t}
\newcommand{\oo}{\omega}
\newcommand{\p}{\partial}
\newcommand{\n}{{\mathbf{n}}}
\newcommand{\po}{{\partial \Omega}}

\newcommand{\rot}{{\mathcal{R}}}
\newcommand{\two}{{\rm II}}
\newcommand{\proj}{{{\mathbb{P}}_{\infty}}}
\newcommand{\curl}{{\bf curl}}
\newcommand{\ddh}{{\, {\rm d}\mathcal{H}^2}}
\newcommand{\nr}{\na^{(r-3)}}
\newcommand{\e}{\epsilon}
\newcommand{\emb}{\hookrightarrow}
\newcommand{\q}{\mathbf{q}}
\newcommand{\shape}{\mathcal{S}}
\newcommand{\vr}{{\mathbf{V}_r^{\nu}}}
\newcommand{\vv}{{v^\nu}}
\newcommand{\V}{\mathcal{V}}
\newcommand{\ttau}{{\boldsymbol \tau}}

\numberwithin{equation}{section}
\numberwithin{figure}{section}

\title{The Inviscid Limit of the Navier-Stokes Equations \\ with Kinematic and Navier Boundary Conditions}
\author{Gui-Qiang G. Chen}
\address{Gui-Qiang G. Chen: Mathematical Institute,\
 University of Oxford, Oxford, OX2 6GG, UK;
School of Mathematics Sciences,
Fudan University, Shanghai 200433, China;
AMSS \& UCAS,
Chinese Academy of Sciences, Beijing 100190, China}
\email{\texttt{chengq@maths.ox.ac.uk}}
\author{Siran Li}
\address{Siran Li:
Department of Mathematics, Rice University, MS 136, P.O. Box 1892, Houston,
Texas, 77251-1892, USA; Department of Mathematics, McGill University, Burnside Hall, 805
Sherbrooke Street West, Montreal, Quebec, H3A 0B9, Canada.}
\email{\texttt{Siran.Li@rice.edu}}
\author{Zhongmin Qian}
\address{Zhongmin Qian: Mathematical Institute,\
 University of Oxford, Oxford, OX2 6GG, UK}
\email{\texttt{qianz@maths.ox.ac.uk}}

\keywords{Navier-Stokes equations, Euler equations, inviscid limit, vanishing viscosity limit,
strong convergence, higher-order, Navier boundary condition, kinematic boundary condition, weak solution,
strong solution, Lie derivative, vorticity, boundary layers}
\subjclass[2010]{Primary: 35Q30, 35Q31, 35Q35, 76D03, 76D05, 76D09}
\date{\today}

\pagestyle{plain}
\begin{document}

\begin{abstract}
We are concerned with the inviscid limit of the Navier-Stokes equations on bounded regular domains in $\R^3$
with the kinematic and Navier boundary conditions.
We first establish the existence and uniqueness of strong solutions in
the class $C([0,T_\star); H^r(\Omega; \R^3)) \cap C^1([0,T_\star); H^{r-2}(\Omega;\R^3))$
with some $T_\star>0$ for
the initial-boundary value problem with the kinematic and Navier boundary conditions on $\partial \Omega$
and divergence-free initial data in the Sobolev space $H^r(\Omega; \R^3)$ for $r\geq 2$.
Then, for the strong solution with $H^{r+1}$--regularity in the spatial variables,
we establish the inviscid limit in $H^r(\Omega; \R^3)$ uniformly on $[0,T_\star)$ for $r > \frac{5}{2}$.
This shows that the boundary layers do not develop up to the highest order Sobolev norm in $H^{r}(\Omega;\R^3)$
in the inviscid limit.
Furthermore, we present an intrinsic geometric proof for the failure of the strong inviscid limit
under a non-Navier slip-type boundary condition.
\end{abstract}
\maketitle

\section{Introduction}

We are interested in the analysis of strong solutions in the Sobolev spaces $H^r$ of the incompressible Navier-Stokes equations
with positive viscosity
coefficient $\nu>0$ in a bounded regular domain $\Omega\subset \R^3$ subject to the kinematic and Navier boundary conditions on $\partial\Omega$
and the divergence-free initial data at $t=0$,
and their convergence to the corresponding strong solution of the Euler equations in the inviscid limit as $\nu\to 0$.
One of our main motivations for such an analysis is to examine whether the boundary layers would develop in some high-order Sobolev norm
in the inviscid limit.

We assume that the boundary, $\po$, of domain $\Omega$ is an embedded oriented $2$-dimensional (2-D) manifold,
{\it i.e.} a regular surface.
The incompressible Navier-Stokes equations in $[0,T]\times\Omega$ take the following form:
\begin{equation}\label{NS equation}
\begin{cases}
\p_t u^\nu + (u^\nu \cdot \na) u^\nu  + \na p^\nu =\nu \Delta u^\nu,\\
\na \cdot u^\nu = 0.
\end{cases}
\end{equation}
In \eqref{NS equation}, the vector field $u^\nu: \Omega \rightarrow \R^3$ is the velocity
of the fluid and the scalar field $p^\nu: \Omega \rightarrow \R$ is the pressure,
both of which depend on the viscosity constant $\nu>0$.
The divergence-free condition of $u^\nu$ describes the incompressibility of the fluid.
The existence, uniqueness, and regularity of weak and strong solutions of the Navier-Stokes equations \eqref{NS equation}
are an important research topic in nonlinear PDEs and mathematical hydrodynamics;
{\it cf.} \cite{kato, kato-ponce, majda, temam, seregin} and the references cited therein.
In this paper, we focus on the Navier-Stokes equations \eqref{NS equation} in a general bounded regular
domain $\Omega$,
for which the geometry of $\Omega$ plays an important role in our analysis.

Consider the initial condition:
\begin{equation}\label{IDC}
u^\nu|_{t=0} = u_0 \qquad \text{ on }\Omega,
\end{equation}
where $u_0$ satisfies the compatibility condition:  $\nabla \cdot u_0=0$ in $\Omega$.

The {\em kinematic boundary condition} is
\begin{equation}\label{KBC}
u^\nu \cdot \n = 0 \qquad \text{ on } \p\Omega \times [0,T],
\end{equation}
{\it i.e.} the normal component
of the velocity on the boundary vanishes.

The {\em Navier boundary condition} is imposed as:
\begin{equation}\label{classical Navier boundary condition}
u \cdot \ttau = -2\zeta \,  \mathbb{D}u (\tau, \n) \qquad \text{ on }  \p\Omega\times[0,T],
\end{equation}
for any $\ttau \in T(\p\Omega)$, where the rate-of-strain tensor is the $3\times 3$ matrix defined by
\begin{equation}
\mathbb{D}u := \frac{1}{2}\big(\na u + (\na u)^\top\big),
\end{equation}
$\mathbb{D}u (\tau, \n):=\ttau^\top  \mathbb{D}u\,\n$,
and constant $\zeta >0$ is known as the {\em slip length} of the fluid.

Traditionally, the Navier-Stokes equations \eqref{NS equation} have been studied with the no-slip condition,
{\it i.e.} the Dirichlet boundary condition $u = 0$ on $\po$.
However, this does not always match with the experimental data; {\it cf.} \cite{einzel, qian}.
First proposed by Navier \cite{navier} in 1816,
the Navier boundary condition \eqref{classical Navier boundary condition}
requires that the tangential component of the velocity field is proportional to that of the normal
vector field of the Cauchy stress tensor.
The proportionality constant $\zeta>0$ is known as the {\em slip length}.
Physically, the Navier boundary condition \eqref{classical Navier boundary condition}
can be induced by the effects of free capillary boundaries,
perforated boundaries, or the exterior electric fields;
{\it cf.} Achdou-Pironneau-Valentin \cite{achdou}, B\"{a}nsch \cite{bansch},
Beavers-Joseph \cite{beavers}, Einzel-Panzer-Liu \cite{einzel},
Maxwell \cite{maxwell}, J\"{a}ger-Mikeli\u{c} \cite{jager1, jager2},
and the references cited therein.

To analyze the initial-boundary value problem \eqref{IDC}--\eqref{classical Navier boundary condition}
for the Navier-Stokes equations \eqref{NS equation},
we adopt an equivalent geometric formulation, as shown in Chen-Qian \cite{cq2}, for the boundary conditions on $\po \times [0,T]$:
\begin{equation}\label{boundary conditions}
\begin{cases}
u^\nu \cdot \n  = 0,\\
\omega^\nu \cdot \ttau = - \frac{1}{\zeta} (\rot u^\nu)\cdot \ttau
   + 2 \rot(\shape(u^\nu)) \cdot \ttau \qquad \text{ on } \po \times [0,T],
\end{cases}
\end{equation}
where
\begin{equation}
\omega^\nu := \na \times u^\nu
\end{equation}
is the vorticity of the fluid,  $\ttau\in T(\po)$ is an arbitrary tangential vector field
on boundary $\po$, $\shape$ is the shape operator of surface $\p\Omega$,
and $\rot$ is the operator corresponds to the left multiplication by the matrix in the local coordinate on $\p\Omega$:
\begin{equation}
\rot =  \begin{bmatrix}
0 & -1\\
1 & 0
\end{bmatrix},
\end{equation}
{\it i.e.} the anti-clockwise rotation by $\frac{\pi}{2}$.
In fact,  $\rot$ can be identified with the Hodge star operator $\ast$ defined for the differential forms on $\R^2$:
For a 2-D vector field $V=(V^1, V^2)^\top$,
\begin{equation}
\rot V = (\ast ( V^\sharp))^\flat = (-V^2, V^1)^\top,
\end{equation}
in which $\sharp$ is the canonical isomorphism between vector fields and differential $1$-forms,
and $\flat$ is its inverse.
The second equation in \eqref{boundary conditions} ({\it i.e.} the Navier boundary condition
in the geometric formulation) has the vorticity on the left-hand side,
but it involves only the zero-th order operations on the velocity  on the right-hand side.

The problem of inviscid limits has been a central topic in mathematical
hydrodynamics ({\it cf.} Constantin \cite{c2}).
In 1975, Swann \cite{swann} proved that, for $\Omega = \R^3$,
when the initial vorticity is in $H^{3+\delta}$, divergence--free, and vanishing at spatial infinity,
and the right--hand side of the vorticity equation lies in $C([0,T); H^2(\R^3))$ for some small $T$,
then the initial-boundary value problem for the Navier--Stokes equations
with zero boundary condition has a unique strong solution,
and the vanishing viscosity limit holds in $L^6 \cap \dot{H}^1$.
In 1986, Constantin \cite{c1} showed that, for $\Omega = \R^3$,
if the Cauchy problem for the Euler equations with initial data $v_0 \in H^{m+2}(\R^3)$ for $m \geq 3$
has a strong solution in $X=C([0,T]; H^m(\R^3))$ up to time $T$,
then there exists $\nu_\star=\nu_\star(T, v_0)$ such that the Cauchy problem for
the corresponding Navier-Stokes
equations for any $\nu \leq \nu_\star$ also has a strong solution in $X$,
and the vanishing viscosity limit holds in $H^m$.
In fact, for $\Omega=\R^d$ for $d=2$ or $3$,
for any $s>\frac{d}{2}+1$ and initial data $v_0 \in H^s$,
the convergence can be obtained in the $H^s$--norm;
{\it cf.} Masmoudi \cite{masmoudi}.
Moreover, in Constantin-Wu \cite{c4}, the vanishing viscosity limits
were also proved on $\Omega = \R^2$ for the initial vorticity in $L^1(\R^2) \cap L^\infty_c(\R^2)$.

On the other hand, in the case that $\Omega$ is a bounded domain with boundary,
and the Navier-Stokes equations are
equipped with the Dirichlet boundary condition,
the vanishing viscosity limit fails in general:
This is due to the formation of {\em boundary layers},
in which the Prandtl equations serve as a candidate for matching the Navier-Stokes and Euler equations;
see {\it e.g.}, Alexandre-Wang-Xu-Yang \cite{alexandre}, G\'{e}rard-Varet-Dormy \cite{gv},
and the references cited therein.
In contrast, when the Navier and kinematic boundary conditions are imposed
to the Navier-Stokes equations, the vanishing viscosity limit can be established
in the affirmative. In 2007, Xiao-Xin  \cite{xin1} proved that,
for the initial data in $H^3$ on a 3-D flat domain, there exists $T_\star >0$ such that
the vanishing viscosity limit holds in $C([0,T_0]; H^2)\cap L^p(0, T_0; H^3)$,
for all $1 \leq p <\infty$.
Various convergence results of this kind for a non-Navier ``slip-type boundary condition''
(first proposed by Bardos \cite{bardos}, which agrees with the Navier condition if and only if the domain is a part of
the flat half--space) have been established, in $W^{k,p}$, $H^s$, or $L^p$ spaces and on 2-D  or 3-D spatial domains;
{\it cf.} Xiao-Xin  \cite{xin1}, Beir\~{a}o da Veiga-Crispo \cite{v1, v2}, Bellout-Neustupa-Penel \cite{bellout},
 Berselli-Spirito \cite{berselli}, Chen-Osborne-Qian \cite{cq1},
 Clopeau-Mikeli\u{c}-Robert \cite{clopeau},  Kelliher \cite{kelliher},
 Wang-Xin-Zang \cite{xin2}, Zhong \cite{zhong}, and the references cited therein.

Furthermore, for the Navier boundary conditions,
Chen-Qian \cite{cq2} and Iftimie-Planas \cite{iftimie1} obtained the vanishing viscosity limit
in $L^\infty_t L^2_x$ on smooth domains $\Omega \subset \R^3$ and $\R^d, d \geq 2$,
provided that strong solutions exist in $H^2$ and $H^{d/2+1+\e}$, respectively;
see also the related results by Ifitimie-Raugel-Sell \cite{iftimie2} on a 3-D thin domain
and by Lopes Filho-Nussenzveig Lopes-Planas \cite{lopes} on 2-D domains,
and the recent results by Drivas-Nguyen \cite{DN}.
In addition, by computations in local coordinates, Neustupa-Penel \cite{neustupa, neustupa2} proved
the convergence in $L^\infty(0, T_\star; H^1) \cap L^2(0, T_\star; H^2)$, provided that the initial
data is in $H^4$, where $T_\star>0$ is a constant depending only on $\Omega$ and the initial data.
Moreover, using the geometric vector field approach, Masmoudi-Rousset \cite{masmoudi-rousset}
established the existence of strong solutions in $L^\infty(0, T; E^m(\Omega;\R^3))\cap L^2(0, T; H^{m+1}(\Omega;\R^3))$
for $m >6$ and the inviscid limit in $L^\infty_tL^2_x$, where the anisotropic co-normal Sobolev space
$E^m$ is given by $E^m := \{u \in H^m_{\rm co}\,:\, \na u \in H^{m-1}_{\rm co}\}$,
and $u \in H^m_{\rm co}$ whenever
$\sum_{0 \leq |l| \leq m} \|Z_l u\|_{L^2(\Omega)} < \infty$
with $\{Z_l\}$ spanning the space of vector fields tangential to $\p\Omega$.

In this paper, by performing the higher-order energy estimates for the weak solutions constructed in \cite{cq2},
we first establish the existence and uniqueness of the strong solution of the Navier-Stokes equations
in $C(0, T_\star; H^r(\Omega; \R^3)) \cap C^1(0, T; H^{r-2}(\Omega; \R^3))$ for some $T_\star>0$ and $r\geq 2$,
subject to the kinematic and Navier boundary conditions.
We assume that domain $\Omega$ is regular, with the smooth second fundamental form $\two$.
In fact, in the estimates, we need $\|\two\|_{C^{r-1}(\po)} < \infty$.
Moreover, an explicit lower bound for $T_\star$ is obtained.
This is achieved by  employing more delicate energy estimates,
which take into account the effects of the curvature (equivalently, the second fundamental form $\two$)
of $\po$ and the Navier boundary conditions.
In addition, we study the inviscid limit (also known as the vanishing viscosity limit)
of the Navier-Stokes equations \eqref{NS equation}: We send $\nu \rightarrow 0^{+}$
 and investigate whether the strong solutions $u^\nu$ converge, in suitable norms,
 to the corresponding solution of the Euler equations describing
 the motion of incompressible, inviscid fluids:
\begin{equation}\label{Euler}
\begin{cases}
\p_t u + (u \cdot \na) u + \na p =0\qquad \text{ in }[0,T]\times\Omega,\\
\na \cdot u =0\qquad\,\,\, \text{ in }[0,T]\times\Omega,\\
u|_{t=0} = u_0 \qquad \text{ on }\Omega,
\end{cases}
\end{equation}
subject to
the {\em no-penetration boundary condition}:
\begin{equation}\label{boundary condition for Euler}
u \cdot \n = 0 \qquad \text{ on } [0,T]\times \po.
\end{equation}
	
As discussed above, for the kinematic and Navier boundary conditions, the inviscid limit problem
was answered in the affirmative for strong solutions on domains with flat boundaries ({\it e.g.}
the half-space) by Xiao-Xin \cite{xin1} and Beir\~{a}o da Veiga-Crispo \cite{v1,v2}.
This is achieved by analyzing the aforementioned simplified boundary condition in \cite{bardos, solonnikov},
which agrees with the Navier boundary condition for flat boundaries.
Similar affirmative results are also established for several modified versions of the slip-type boundary
conditions in \cite{xin2, zhong}.
In addition, the inviscid limit for the strong solutions in $L^2$ or $H^1$
under the kinematic and Navier boundary conditions are proved by Chen-Qian \cite{cq2},
Intimie-Planas \cite{iftimie1}, and Neustupa-Penel \cite{neustupa, neustupa2} for bounded,
regular, possibly non-flat domains in $\R^3$.
	
On the other hand,  recently in \cite{v3, v4}, Beir\~{a}o da Veiga-Crispo proved that
the inviscid limits in strong topologies of $W^{s,p}$ for $s >1$ and $p >1$ fails for general non-flat domains,
with the Navier-Stokes equations equipped with the simplified boundary conditions
as in \cite{bardos, solonnikov}.
In comparison, the inviscid limit in strong topologies always
holds for regular domains in 2-D, when the  Navier boundary condition
is assumed. This is largely due to the fact that the vorticity
is transported in 2-D; {\it cf.} \cite{clopeau, lopes, c3}.
	
In view of the discussions above, it is important
to understand whether the inviscid limit holds for strong solutions
in the higher-order Sobolev norms in $H^r(\Omega;\R^3)$ for $r >1$ in a bounded, regular,
generally non-flat domain $\Omega \subset \R^3$,
when the Navier-Stokes equations \eqref{NS equation} are equipped with the Navier boundary
conditions ({\it i.e.} Eq. \eqref{boundary conditions}). To the best of our knowledge,
this problem is still largely open. In Theorem \ref{theorem: inviscid limit}, we answer this question
in the affirmative: If the strong solution exists in $H^{r+1}(\Omega; \R^3)$ for $r >\frac{5}{2}$,
we establish its strong convergence in $H^r (\Omega;\R^3)$ as the viscosity constant $\nu\to 0$.
This implies
that the boundary layers do not develop up to the highest order Sobolev norm in $H^{r}(\Omega; \R^3)$ for $r>\frac{5}{2}$.

The rest of the paper is organized as follows: In \S 2
we briefly sketch the derivation of the boundary conditions in terms of geometric quantities.
In \S 3, we prove a lemma which expresses the $H^r$--norm of a divergence-free vector field
by the $L^2$--norm of the iterated curls, subject to the kinematic and Navier boundary conditions.
Next, in \S 4, we derive the {\em a priori}, higher-order energy estimates in $H^r(\Omega; \R^3)$ for $r \geq 2$
for the Navier-Stokes equations with kinematic and Navier boundary conditions.
We also deduce the existence of strong solutions from the energy estimates.
Then, in \S 5, the inviscid limit is established.
Finally, in \S 6, we discuss the inviscid limit problem for other non-Navier slip-type boundary conditions.

Before concluding this introduction, we present some notations that will be used from now on  in this paper.
We denote $H^r(\Omega; \R^3)=W^{2,r}(\Omega; \R^3)$ as the Sobolev space of vector
fields $\phi: \Omega \rightarrow \R^3$ with the norm in the multi-index notation:
\begin{equation}
\|\phi\|_{H^r(\Omega)}:=\Big(\sum_{0 \leq |\alpha| \leq r} \int_{\Omega}|\na^\alpha \phi|^2\dx\Big)^{1/2} < \infty.
\end{equation}
We write $\na^{[s]}$ to  denote a generic differential
operator $\na_{i_1}\na_{i_2}\cdots\na_{i_{s}}$ for any $s \geq 1$.
The Einstein summation convention is used. For the indices,
we write $i_1, i_2, \ldots ,j,k,l,\ldots \in \{1,2,3\}$ and $\alpha,\beta,\gamma, \delta, \ldots \in \{1,2\}$.
The angular bracket $\langle\cdot,\cdot\rangle$ denotes the Euclidean inner product of two vectors in $\R^3$.
Furthermore, we write $f \lesssim g$ if $|f| \leq C|g|$ for a generic constant $C$ depends only
on $r$, $\|\two\|_{C^{r-1}(\po)}$, and $\zeta$; and write $f \simeq g$ whenever $f \lesssim g$ and $g \lesssim f$.
Denote $\mathcal{H}^2$ as the 2-D Hausdorff measure.
Finally, $\curl^r := \curl \circ\ldots\circ\curl$ means the composition of $r$ curls.

\section{The Navier Boundary Condition}

In this section, we briefly sketch the derivation of the boundary conditions in terms of geometric quantities.

First of all, we justify that our geometric formulation of the Navier boundary
condition ({\it i.e.} the second equation in \eqref{boundary conditions}, reproduced below):
\begin{equation*}
\omega^\nu \cdot \ttau
= - \frac{1}{\zeta} (\rot u^\nu)\cdot \ttau + 2 \rot\big(\shape(u^\nu)\big) \cdot \ttau
\qquad \text{ on } \p\Omega \times [0,T] \text{ for any } \ttau \in T (\p\Omega)
\end{equation*}
is indeed equivalent to the one proposed by Navier in \cite{navier}.
For simplicity, we drop superscript $\nu$ in this section.

We start by remarking on the geometric notations.
Recall that the boundary of the domain of fluid, $\po$, is a regular surface embedded in $\R^3$.
We denote its second fundamental form by $\two: T(\po) \times T(\po) \rightarrow \R$,
where $T(\po)$ is the tangent bundle of $\po$.
Thus, writing $\n \in T(\po)^\perp$ as the outward unit
normal (viewed as the Gauss map $\n: \po \rightarrow \mathbb{S}^2$), we have
\begin{equation}
\two = -\na \n.
\end{equation}
In addition, take $\{e_1, e_2, e_3\}$ to be an orthonormal frame such
that $e_1, e_2 \in T(\po)$ and $e_3 = \n$.
Then we have the local expression:
\begin{equation}
\two(u,v) = \sum_{\alpha=1}^2\sum_{\beta=1}^2 \two_{\alpha\beta}u^\alpha v^\beta.
\end{equation}
The shape operator $\shape: T(\po) \rightarrow T(\po)$ is then defined as
\begin{equation}
\shape(u):= -\na_u \n,
\end{equation}
where $\na_u$ means the directional derivative in the direction of $u$.

Now, recall that the Navier boundary condition reads that, for any $\ttau \in T(\p\Omega)$,
\begin{equation*}
u \cdot \ttau = -2\zeta \,  \mathbb{D}u (\ttau, \n) \qquad \text{ on }  \p\Omega\times[0,T],
\end{equation*}
where, in local coordinates, the rate-of-strain tensor is given by
\begin{equation*}
(\mathbb{D}u)_{ij} = \frac{1}{2}\big(\na_i u^j + \na_j u^i\big), \qquad 1\leq i,j \leq 3.
\end{equation*}
Suppose that $\{{e}_1, {e}_2, e_3\}$ is an orthonormal moving frame adapted to $\p\Omega$, with ${e}_3=\n$.
Then the Navier boundary condition is equivalent to the following:
\begin{equation}
u^1 = -\zeta (\na_3 u^1 + \na_1 u^3),
\quad
u^2 = - \zeta (\na_3 u^2+ \na_2 u^3) \,\,\qquad \text{ on }  \p\Omega\times[0,T].
\end{equation}

The main issue of this paper is to derive the higher-order energy estimates of velocity $u$.
As shown in \S 3 below, the $H^r$--norm of $u$ is estimated purely by the $L^2$--norm
of the $r$-th iterated {\em curls} of $u$ ({\it cf.} Theorem \ref{theorem: div-curl estimate}).
We now seek for the boundary condition with respect to the vorticity: $\omega = \na \times u$.
For this purpose, note that
\begin{equation*}
\omega = \begin{bmatrix}
\na_2 u^3 - \na_3 u^2\\
\na_3 u^1 - \na_1 u^3\\
\na_1 u^2 - \na_2 u^1
\end{bmatrix}
\end{equation*}
in the local frame $\{{e}_1, {e}_2, {e}_3\}$.
Then the Navier boundary condition \eqref{classical Navier boundary condition} becomes
\begin{equation}\label{classical navier condition, local}
\na_k u^3 + \na_3 u^k = -\frac{1}{\zeta} u^k \qquad \text{ for } k \in \{1,2\}.
\end{equation}
On the other hand, $\na_k u^3$ can be computed as
\begin{equation}\label{partial k e3}
\na_k u^3 = \na_k (u \cdot \n) = \p_3 (u \cdot \n) + \sum_{j=1}^3 \Gamma^3_{kj}u^j,
\end{equation}
where $\Gamma^k_{ij} = \frac{1}{2} g^{kl} (\p_i g_{jl} + \p_j g_{il} - \p_{l} g_{ij})$ are the Christoffel symbols.
Observe also that
\begin{align}\label{two jk}
\two_{jk} = \two({e}_j, {e}_k) \cdot \n = -  \na_j {e}_k \cdot \n
= - \sum_{l=1}^3 \Gamma^l_{jk} {e}_l \cdot \n = - \Gamma^3_{jk}.
\end{align}
Then, by collecting Eqs. \eqref{classical navier condition, local}--\eqref{two jk}, we have
\begin{equation}
\na_k u^3 - \na_3 u^k = 2 \p_k (u \cdot \n) - 2 \sum_{j=1}^3 \two_{jk} u^j + \frac{1}{\zeta} u^k.
\end{equation}
Finally, in view of the kinematic boundary condition ({\it i.e.} the first equation in \eqref{boundary conditions}),
$u \cdot \n = 0$ on $\p\Omega$. Then, by taking $k=1,2$, respectively, and recalling the definition of $\rot$,
we immediately recover the second equation in \eqref{boundary conditions}.
Note that the term, $2\rot (\shape (u^\nu))$, reflects the geometry of the curvilinear fluid domain.
It vanishes when the domain is flat, {\it e.g.} the half plane.
In the rest of the paper, this is referred to as {\em the Navier boundary condition}.

\section{A Div-Curl Estimate for Divergence-free Vector Fields}

In this section, we show that the $H^{r+1}$--norm of a divergence-free vector field is equivalent to
the sum of the $L^2$--norms of its iterated curls up to the $(r+1)$-th order.
It is a variant of the well-known div-curl estimate due to Cald\'{e}ron-Zygmund
for divergence-free vector fields.

\begin{theorem}\label{theorem: div-curl estimate}
Let $u \in H^{r+1}(\Omega; \R^3)\cap K_2(\Omega)$ for $r \geq 0$ satisfy the kinematic and Navier boundary conditions \eqref{boundary conditions},
where
\begin{equation}\label{3.2a}
K_2(\Omega):= \big\{ u \in L^2(\Omega; \R^3)\,:\, \nabla\cdot u =0\big\}.
\end{equation}
Then there exists a universal constant $M=M(r, \Omega)>0$ such that
\begin{equation}
\|\na^{r+1} u \|^2_{L^2(\Omega)}  \leq M  \sum_{l=0}^{r+1} \|\curl^l \, u\|^2_{L^2(\Omega)}.
\end{equation}
\end{theorem}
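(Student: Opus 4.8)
The plan is to prove the inequality by induction on $r$, peeling off one curl at each step, the engine being a first-order div-curl identity for a single vector field whose boundary term is dictated by the kinematic and Navier boundary conditions. The point is that each application of the first-order estimate trades one full derivative of a divergence-free field for one curl of it, so iterating $r+1$ times telescopes exactly into $\sum_{l=0}^{r+1}\|\curl^l u\|_{L^2}^2$, provided the boundary contributions generated along the way can be controlled.

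First I would establish, for an arbitrary smooth vector field $w$, the pointwise identity $|\na w|^2 = |\curl w|^2 + \na_i w^j\,\na_j w^i$, integrate it over $\Omega$, integrate the cross term by parts, and use $\na_i\na_j w^i = \na_j(\na\cdot w)$ to turn the interior remainder into a boundary integral:
\begin{equation*}
\int_\Omega |\na w|^2\dx = \int_\Omega |\curl w|^2\dx + \int_\Omega |\na\cdot w|^2\dx + \int_{\po}\big[(\na_w w)\cdot\n - (w\cdot\n)(\na\cdot w)\big]\ddh .
\end{equation*}
This is the exact first-order analogue of the target inequality. For the base case $r=0$ I take $w=u$: since $\na\cdot u = 0$ and $u\cdot\n = 0$ on $\po$, the field $u$ is tangential along the boundary, so differentiating $u\cdot\n = 0$ in the direction $u$ gives $(\na_u u)\cdot\n = \two(u,u)$, and the boundary integral collapses to $\int_{\po}\two(u,u)\ddh$. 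Bounding this by $\|\two\|_{C^0}\,\|u\|_{L^2(\po)}^2$ and invoking the multiplicative trace inequality $\|u\|_{L^2(\po)}^2\lesssim \|u\|_{L^2(\Omega)}\|u\|_{H^1(\Omega)}$ together with Young's inequality lets me absorb an $\e\|\na u\|_{L^2}^2$ into the left-hand side, yielding $\|\na u\|_{L^2}^2\lesssim \|\curl u\|_{L^2}^2 + \|u\|_{L^2}^2$.

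For the inductive step I would promote this to the higher-order Hodge estimate $\|w\|_{H^{s+1}}^2\lesssim \|\curl w\|_{H^s}^2 + \|\na\cdot w\|_{H^s}^2 + \|w\|_{L^2}^2 + \mathcal B_s(w)$, obtained by localizing near $\po$, flattening the boundary, and applying the first-order identity to tangential derivatives of order $s$; the normal derivatives are then recovered algebraically, the normal component from $\na\cdot w$ and the tangential components from $\curl w$, with all commutators between $\na$, $\curl$ and the normal frame producing curvature terms of strictly lower order that are swallowed by the inductive hypothesis. Applying this with $w=u$ and $s=r$, and then repeatedly with $w=\omega_l:=\curl^l u$ (each automatically divergence-free, with $\curl\,\omega_l=\omega_{l+1}$) at the decreasing orders $s=r-l$ down to $s=0$, telescopes the right-hand side into $\sum_{l=0}^{r+1}\|\curl^l u\|_{L^2}^2$ plus the accumulated boundary terms $\mathcal B_s(\omega_l)$.

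The hard part is precisely these accumulated boundary terms: unlike $u$, the iterated curls $\omega_l$ do \emph{not} satisfy $\omega_l\cdot\n = 0$, so $\mathcal B_s(\omega_l)$ genuinely contains normal derivatives at $\po$, which are not controllable by traces alone. This is exactly where the Navier boundary condition is indispensable, since it expresses the tangential part $\omega\cdot\ttau$ (and, after commuting curls with tangential derivatives, the tangential parts of every $\omega_l$) through zeroth-order data in $u$ and the shape operator $\shape$, with no loss of derivatives. Exploiting the cancellation of the two $\na\cdot$ contributions in $\mathcal B_s$ and integrating by parts tangentially on the closed surface $\po$, I expect each boundary contribution to reduce to a sum of $\|\two\|_{C^{s}(\po)}$ times traces of lower-order curls, which trace interpolation and Young's inequality absorb into the interior $H^{r+1}$ norm with small constant. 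Tracking these curvature weights through the induction is what forces the final constant $M$ to depend on $r$, on $\|\two\|_{C^{r-1}(\po)}$, and on the slip length $\zeta$.
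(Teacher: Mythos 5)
Your base case is exactly the paper's (the identity $\|\na u\|^2_{L^2(\Omega)}=\|\curl\, u\|^2_{L^2(\Omega)}+\int_\po\two(u,u)\ddh$ for tangential divergence-free fields, followed by trace--Young absorption), but your inductive scheme is organized quite differently: you telescope through single-curl Hodge estimates for the iterated curls $\omega_l:=\curl^l u$, each of which needs its own boundary condition, whereas the paper integrates $\|\na^{r+1}u\|^2_{L^2(\Omega)}$ by parts twice to produce $\|\na^{r-1}\Delta u\|^2_{L^2(\Omega)}=\|\na^{r-1}\curl^2 u\|^2_{L^2(\Omega)}$ plus boundary integrals involving only derivatives of $u$ itself, and then disposes of those using nothing beyond the first-order relations \eqref{BC in local frame}--\eqref{identities for na 3} (normal derivatives of tangential components replaced by zeroth-order terms via the Navier condition, normal derivatives of normal components replaced by tangential ones via incompressibility, commutators via the Ricci identity). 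Your route is viable in principle, but as written it has a genuine gap precisely at its crux.

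The gap is this: your control of the accumulated boundary terms $\mathcal B_s(\omega_l)$ rests on the claim that the Navier condition expresses ``the tangential parts of every $\omega_l$ through zeroth-order data in $u$,'' and on the statement ``I expect each boundary contribution to reduce to\dots''. Neither is proven, and the first is false as stated. What is true is the one-order-gain identity
\begin{equation*}
\pi\big(\curl^{l+1} u\big)=-\tfrac{1}{\zeta}\,\rot\circ\pi\big(\curl^{l}u\big)+2\,\rot\circ\pi\circ\curl^{l}\big(\shape\circ\pi(u)\big)
\qquad\text{ on }\po,
\end{equation*}
i.e. $\pi(\curl^{l+1}u)$ is expressed through data of order $l$ in $u$ (not order zero); this is exactly the identity the paper proves later, for different purposes, as \eqref{claim: boundary condition for curls of v}, by a separate induction resting on the commutation relation \eqref{commutation relation for pi curl rot}, and it is an additional lemma your scheme cannot do without. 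Indeed, at the last level of your telescope ($w=\omega_r$, $s=0$), after your tangential integration by parts on the closed surface $\po$ and after using $\na\cdot\omega_r=0$ to trade $\p_\n(\omega_r\cdot\n)$ for tangential derivatives, the boundary term is essentially $\int_\po(\omega_r\cdot\n)\,{\rm div}^T\big(\pi(\omega_r)\big)\ddh$ plus curvature-weighted quadratic traces. Without the iterated boundary condition, ${\rm div}^T(\pi(\omega_r))$ is a trace of $(r+1)$-th order derivatives of $u$, and a term quadratic in $\|\na^{r+1}u\|_{L^2(\po)}$ cannot be absorbed into $\e\|\na^{r+1}u\|^2_{L^2(\Omega)}$: trace interpolation would require $u\in H^{r+3/2}(\Omega)$, beyond the hypothesis $u\in H^{r+1}(\Omega;\R^3)$. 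So the step you defer with ``I expect'' is not a routine verification; it is where the entire content of the theorem lives (it occupies Steps 3--6 of the paper's proof), and to close your argument you must both state and prove the iterated-curl boundary condition and then carry out the absorption level by level.
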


Here and in the sequel, the time variable $t$ is always suppressed when only the spatial regularities are considered.
The following {\em Sobolev trace theorem} is also frequently used:

\begin{lemma}[Theorem 5.36 in \cite{sobolev}]\label{lemma: trace theorem}
Let $\Omega$ be a domain in $\R^n$ satisfying the uniform $C^m$--regularity condition.
Assume that there exists a $(m,p)$--extension operator for $\Omega$. Suppose that
\begin{equation}
mp < n, \qquad p \leq q \leq p^\ast := \frac{(n-1)p}{n-mp}.
\end{equation}
Then the continuous embedding $W^{m,p}(\Omega) \emb L^q(\po)$ holds.
\end{lemma}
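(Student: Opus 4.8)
The plan is to follow the classical Gagliardo strategy for boundary traces, reducing the global embedding to a one-dimensional calculus estimate on the half-space. First I would use the hypothesized $(m,p)$--extension operator to extend any $u \in W^{m,p}(\Omega)$ to $Eu \in W^{m,p}(\R^n)$ with $\|Eu\|_{W^{m,p}(\R^n)} \lesssim \|u\|_{W^{m,p}(\Omega)}$, and invoke the density of $C^\infty \cap W^{m,p}$, so that all estimates may be derived for smooth functions and then extended by continuity. Using the uniform $C^m$--regularity of $\po$, I would fix a locally finite atlas of boundary charts together with a subordinate partition of unity $\{\psi_j\}$; in each chart a $C^m$--diffeomorphism straightens $\po$ to a piece of the hyperplane $\{x_n = 0\}$, so that it suffices to prove, for $v \in C_c^\infty(\overline{\R^n_+})$ with $\R^n_+ = \{(x',x_n): x_n > 0\}$,
\[
\|v(\cdot, 0)\|_{L^q(\R^{n-1})} \lesssim \|v\|_{W^{m,p}(\R^n_+)},
\]
with the chart Jacobians and $\|\two\|$ absorbed into the implicit constants.

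The heart of the argument is this half-space inequality, which I would establish at the two endpoints $q = p$ and $q = p^\ast$ and then fill in the intermediate range by interpolation. For the lower endpoint, the fundamental theorem of calculus gives
\[
|v(x', 0)|^p = -\int_0^\infty \p_n\big(|v(x', x_n)|^p\big)\,\dd x_n \leq p \int_0^\infty |v|^{p-1}|\p_n v|\,\dd x_n,
\]
and integrating in $x'$ followed by H\"older's inequality yields $\|v(\cdot, 0)\|_{L^p(\R^{n-1})}^p \lesssim \|v\|_{L^p(\R^n_+)}^{p-1}\|\na v\|_{L^p(\R^n_+)} \lesssim \|v\|_{W^{1,p}(\R^n_+)}^p$. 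For the sharp endpoint the key observation is that a single normal integration exploits only one derivative: running the Gagliardo estimate $|v(x',0)|^s \leq s\int_0^\infty |v|^{s-1}|\p_n v|\,\dd x_n$ directly against the interior critical exponent of $W^{m,p}$ would produce the lossy exponent $\frac{p(n-m)}{n-mp}$, strictly below $p^\ast$ when $m>1$. To recover the sharp value I would first convert the extra $m-1$ derivatives into integrability through the interior Gagliardo-Nirenberg-Sobolev embedding $W^{m,p}(\R^n_+) \emb W^{1,\tilde p}(\R^n_+)$ with $\frac{1}{\tilde p} = \frac{1}{p} - \frac{m-1}{n}$, which is admissible precisely because $mp < n$ forces $\tilde p < n$, and only then take the trace. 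Choosing the Gagliardo exponent $s$ so that, after H\"older with the pair $(\tilde p, \tilde p')$, the factor $|v|^{s-1}$ is controlled in the interior critical norm $L^{n\tilde p/(n-\tilde p)}(\R^n_+)$ forces $s = \frac{(n-1)\tilde p}{n-\tilde p}$, and a short computation verifies $\frac{(n-1)\tilde p}{n-\tilde p} = \frac{(n-1)p}{n-mp} = p^\ast$. The intermediate range $p \leq q \leq p^\ast$ then follows from the logarithmic convexity of the $L^q(\R^{n-1})$ norms.

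Finally I would reassemble the global estimate: writing $u = \sum_j \psi_j u$, transporting each piece to the half-space through its chart, applying the two-endpoint half-space inequality, and summing, the locally finite structure of the atlas and the boundedness of the transition maps (guaranteed by the $C^m$--regularity) deliver $\|u\|_{L^q(\po)} \lesssim \|u\|_{W^{m,p}(\R^n)} \lesssim \|u\|_{W^{m,p}(\Omega)}$. I expect the main obstacle to be exactly the sharp endpoint bookkeeping: one must resist the naive single normal integration and instead compose the two reductions—the interior embedding down to one derivative and the chart flattening—so that the exponents chain together to land on $p^\ast = \frac{(n-1)p}{n-mp}$ rather than on the weaker $\frac{p(n-m)}{n-mp}$; keeping the partition-of-unity constants uniform on a possibly unbounded boundary is the secondary technical difficulty.
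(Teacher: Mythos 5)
This lemma is quoted in the paper from Adams--Fournier (Theorem 5.36 in \cite{sobolev}) without proof, so there is no internal argument to compare against; your proposal reconstructs the standard Gagliardo-type trace argument used in that reference. The plan is sound: the extension-plus-chart-flattening reduction, the two endpoint half-space estimates, the interpolation for intermediate $q$, and in particular the sharp-endpoint exponent arithmetic all check out, since with $\frac{1}{\tilde p} = \frac{1}{p} - \frac{m-1}{n}$ one indeed has $\tilde p < n$ (as $mp<n$) and $\frac{(n-1)\tilde p}{n-\tilde p} = \frac{(n-1)p}{n-mp} = p^\ast$.
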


In particular, it implies that $H^1(\Omega) \emb L^q(\po)$ for any $q \in [2,4]$ in the regular domain $\Omega \subset \R^3$.
\begin{proof}[Proof of Theorem {\rm \ref{theorem: div-curl estimate}}]
We prove the theorem by induction on $r$. The arguments are divided into seven steps.
	
{\bf 1.} We first establish the base case $r=0$. Indeed, in view of the following
identity (see Eq. (3.3) in Chen-Qian \cite{cq2}):
\begin{equation*}
\|\na u\|^2_{L^2(\Omega)} = \|\na\times u\|^2_{L^2(\Omega)} + \|\na \cdot u \|^2_{L^2(\Omega)}
  - \int_\po (\na \cdot u) \langle u, \n \rangle \ddh + \int_\po \langle u\cdot \na u, \n \rangle \ddh,
\end{equation*}
for the incompressible velocity field satisfying the kinematic boundary condition,  we have
\begin{equation}\label{nabla u in L2}
\|\na u\|^2_{L^2(\Omega)} = \|\na\times u\|^2_{L^2(\Omega)} + \int_\po \two(u,u)\ddh,
\end{equation}
where we have utilized the definition of the second fundamental form $\two:=-\na \n$.
Since $\|\two\|_{L^\infty(\po)} <\infty$, we bound
\begin{equation}
\big|\int_\po \two(u,u)\ddh\big| \leq \|\two\|_{L^\infty(\po)} \|u\|^2_{L^2(\po)} \leq \e \|\na u\|^2_{L^2(\Omega)} + \frac{C}{\e} \|u\|^2_{L^2(\Omega)},
\end{equation}
thanks to the Sobolev trace inequality and Young's inequality. 
Thus, the case for $r=0$ follows immediately by choosing $\e$ suitably small.

{\bf 2.} We now assume the result for $r \geq 0$ and prove it for $r+1$.
First of all, we apply integration by parts twice to obtain
\begin{align}
\|\na^{r+1} u \|^2_{L^2(\Omega)}
&=\int_{\Omega} {\big(\p_{i_1}\cdots\p_{i_{r+1}}u^k\big)\big(\p_{i_1}\cdots\p_{i_{r+1}}u^k\big)}\dx \nonumber\\
& = \int_{\Omega} \p_{i_1}\Big\{\big(\p_{i_2}\cdots\p_{i_{r+1}}u^k\big)\big(\p_{i_1}\cdots\p_{i_{r+1}}u^k\big)\Big\}\dx \nonumber\\
&\quad - \int_{\Omega} \p_{i_2} \Big\{\big(\p_{i_2}\cdots\p_{i_{r+1}}u^k\big)\big(\Delta\p_{i_3}\cdots\p_{i_{r+1}}u^k\big)\Big\} \dx \nonumber\\
&\quad + \int_\Omega \big(\Delta \p_{i_3} \cdots \p_{i_{r+1}}u^k \big)\big(\Delta \p_{i_3} \cdots \p_{i_{r+1}}u^k \big) \Big\}\dx \nonumber\\
&=: I+J+K.
\end{align}
Using the divergence theorem, the above three integrals are expressed as
\begin{equation}\label{eq: IJK}
\begin{cases}
I = \frac{1}{2}\int_\po \p_{\n}|\na^{r}u|^2 \ddh, \\
J = \int_\po \big(\p_{i_2}\cdots\p_{i_{r+1}}u^k\big)\big(\Delta\p_{i_3}\cdots\p_{i_{r+1}}u^k\big) \langle \na_{i_2}, \n \rangle \ddh,\\
K = \int_{\Omega}|\na^{r-1}\psi|^2\dx,
\end{cases}
\end{equation}	
where $\psi = \curl \, \omega = - \Delta u$ is the stream function.

{\bf 3.}
Now we bound the surface integral $I$ in \eqref{eq: IJK}.
For this purpose, we introduce a local moving frame $\{e_1, e_2, e_3\}$ on surface $\po$
such that $e_1, e_2 \in T(\po)$ and $e_3 = \n$.  Then
\begin{align}\label{I=I1+I2}
I&= \int_\po \big(\na_{i_1}\cdots \na_{i_r} u^k\big)\big(\na_{i_1}\cdots \na_{i_r}\na_3 u^k\big) \ddh
+ \int_\po \big(\na_{i_1}\cdots \na_{i_r} u^k\big) \big([\na_3, \na_{i_1}\cdots \na_{i_r}] u^k \big) \ddh \nonumber\\
&=: I^1 + I^2,
\end{align}
where $[\cdot,\cdot]$ denotes the commutator.
Since the commutator is of lower order, the second term in the integrand of $I^2$ is schematically represented
as $\na^{[r-1]}u^k$. More precisely, by the Ricci identity:
\begin{equation}
\na_i \na_j V^k - \na_j \na_i V^k = \sum_{l} C_{ij}^{kl} V_l
\end{equation}
for any vector field $V \in T\R^3$ and some constants $C^{kl}_{ij}$, each time we exchange $\na_3$ with $\na_{i_j}$, a zero-th order term is obtained.
Then the Leibniz rule yields
\begin{equation}
[\na_3, \na_{i_1}\cdots \na_{i_r}] u^k  \simeq \na^{[r-1]}u^k.
\end{equation}
Then the Cauchy-Schwarz inequality leads to
\begin{align}\label{I2}
|I^2| \lesssim \|u\|_{H^r(\po)}^2 + \|u\|_{H^{r-1}(\po)}^2
\lesssim \e\| \na^{r+1}u \|^2_{L^2(\Omega)} + (1+\frac{1}{\e}) \|u\|_{H^r(\Omega)}^2,
\end{align}
where the second line follows from the Sobolev trace embedding $H^{r+1}(\Omega) \emb H^r(\po)$
for $r \geq 0$, together with the interpolation inequalities.
	
{\bf 4.} To bound $I^1$, we make a crucial use of the kinematic and Navier boundary conditions \eqref{boundary conditions}.
First, we rewrite it in the local frame $\{e_1, e_2, e_3\}$ as
	\begin{equation}\label{BC in local frame}
	\begin{cases}
	u^3 = 0, \\
	\na_3 u^\beta = 2 \two_{\alpha \beta}u^\alpha - \frac{1}{\zeta} u^\beta \qquad \text{ for } \beta \in \{1,2\},
	\end{cases}
	\end{equation}
where $\na_\alpha u^3 \equiv 0$ so that $\omega^1 = -\na_3 u^2$ and $\omega^2=\na_1 u^3$.
Moreover, from the incompressibility condition: $\na \cdot u =0$,
the following identities hold:
\begin{equation}\label{identities for na 3}
\begin{cases}
\na_3 u^3 = - \na_\alpha u^\alpha,\\
\na_3\na_3 u^\alpha = - \psi^\alpha - \na_\beta \na_\beta u^\alpha.
\end{cases}
\end{equation}
The key to  Eqs. \eqref{BC in local frame}--\eqref{identities for na 3} is that
the {\em normal derivatives} $\na_3$ of the normal components can be replaced
by the tangential derivatives, and the normal derivatives of the tangential components
can be replaced by the lower-order terms.

{\bf 5.}
We now estimate $I^1$.
For simplicity, we introduce the short-hand notations:
\begin{equation}
\nr A \cdot \nr B :=  \big(\na_{i_1}\cdots\na_{i_{r-3}}A\big) \cdot \big(\na_{i_1}\cdots\na_{i_{r-3}}B\big),
\end{equation}
for any  sufficiently regular functions $A$ and $B$.
Then we split $I^1$ into six terms:
\begin{equation*}
I^1:= I^{1,1} + I^{1,2} +I^{1,3} + I^{1,4} + I^{1,5} + I^{1,6},
\end{equation*}
where
\begin{equation}
\begin{cases}
I^{1,1} = \int_\po\,(\nr \na_\alpha \na_\beta u^3) \cdot (\nr \na_\alpha\na_\beta \na_3 u^3)\ddh,\\
I^{1,2} = \int_\po\, (\nr \na_\alpha \na_\beta u^\gamma) \cdot (\nr \na_\alpha\na_\beta \na_3 u^\gamma)\ddh,\\
I^{1,3} = \int_\po\, (\nr \na_\alpha \na_3 u^3) \cdot (\nr \na_\alpha\na_3 \na_3 u^3)\ddh,\\
I^{1,4} = \int_\po\, (\nr \na_\alpha \na_3 u^\gamma) \cdot (\nr \na_\alpha\na_3 \na_3 u^\gamma)\ddh,\\
I^{1,5} = \int_\po\, (\nr \na_3 \na_3 u^3) \cdot (\nr \na_3\na_3 \na_3 u^3)\ddh,\\
I^{1,6} = \int_\po\, (\nr \na_3 \na_3 u^\gamma) \cdot (\nr \na_3\na_3\na_3 u^\gamma)\ddh.
\end{cases}
\end{equation}
In the sequel, we estimate these terms one by one.

First of all, $I^{1,1} = 0$, since $\na_\beta u^3\equiv 0$.

To estimate $I^{2,2}$, we first notice that
\begin{align}\label{x}
\big|\nr \na_\alpha\na_\beta \na_3 u^\gamma \big|
=\big| \nr \na_\alpha\na_\beta\big(2\two_{\gamma\beta}u^\beta - \frac{1}{\zeta}u^\gamma\big)\big|
= C |\na^{[r-1]} u| + {\rm {l.o.t.}},
\end{align}
where $C$ depends on $\|\two\|_{C^{r-1}(\po)}$ and $\zeta^{-1}$, and ${\rm l.o.t.}$ contains the derivatives of $u$ of order
less than or equal to $r-2$.
Next, considering the two cases: $\alpha = \beta$ and $\alpha \neq \beta$ separately, we deduce
\begin{align}
I^{1,2} =& \int_\po \big(\nr \Delta u^\gamma\big)\cdot \big(\nr \Delta \na_3 u^\gamma\big) \ddh \nonumber\\
 &+ 2 \int_{\po}\big(\nr \na_1\na_2 u^\gamma\big)\cdot\big(\nr \na_1\na_2\na_3 u^\gamma\big) \ddh.
\end{align}
For the first term, again by the Ricci identity, we write
\begin{equation}
\na^{[r-3]} \Delta \na_3 u^\gamma = \nr \na_3 \Delta u^\gamma + \na^{[r-3]}(\Delta u^\gamma) = - \na^{[r-3]} \na_3 \psi^\gamma - \na^{[r-1]}u^\gamma,
\end{equation}
and treat the second term as in Eq. \eqref{x} above. Then we obtain
\begin{align}
|I^{1,2}| &\lesssim \Big|\int_\po \big(\na^{[r-3]} \na_3 \psi^\gamma\big)\cdot \big( \na^{[r-1]}u^\gamma\big)\ddh\Big| 
+ \int_\po |\na^{[r-1]}u|^2\ddh \nonumber\\ &\lesssim  \|u\|^2_{H^{r-1}(\po)} + \|\na^{[r-3]} (\na \times \psi)\|^2_{L^2(\po)}
\end{align}
by the Cauchy-Schwarz inequality.
By the trace and interpolation inequalities, we have
\begin{equation*}
\|\na^{[r-3]} (\na \times \psi)\|^2_{L^2(\po)} \lesssim \e \|\na^{r+1}u\|^2_{H^{r+1}(\Omega)} + \frac{1}{\e} \|u\|^2_{H^r(\Omega)}.
\end{equation*}
Then
\begin{equation}\label{I1,2}
|I^{1,2}| \lesssim \e \|\na^{r+1}u\|^2_{H^{r+1}(\Omega)} + \frac{1}{\e} \|u\|^2_{H^r(\Omega)}.
\end{equation}

For $I^{1,3}$, again by Eq. \eqref{identities for na 3}, the Ricci identity, the boundary condition \eqref{BC in local frame},
and the trace and interpolation inequalities, we have
\begin{align}\label{I1,3}
|I^{1,3}|  &= \Big|\int_\po \big(\nr \na_\alpha \na_\beta u^\beta\big)\big( \nr \na_\alpha\na_3 \na_\gamma u^\gamma\big) \ddh\Big| \nonumber\\
&\simeq \Big|\int_\po \big(\nr \na_\alpha \na_\beta u^\beta\big)\big(\nr \na_\alpha \na_\gamma
\na_3 u^\gamma + \na^{[r-1]}u\big)\ddh\Big| \nonumber\\
&\simeq  \Big|\int_\po \Big(\nr \na_\alpha \na_\beta u^\beta\Big)\Big(\nr \na_\alpha \na_\gamma
\big(2\two_{\delta\gamma} u^\delta- \frac{1}{\zeta} u^\gamma\big) + \na^{[r-1]}u\Big)\ddh\Big| \nonumber\\
&\lesssim \|u\|^2_{H^{r-1}(\po)} \lesssim \|u\|^2_{H^r(\Omega)}.
\end{align}

The treatment for $I^{1,4}$ is similar to the above for $I^{1,3}$:
\begin{align}\label{I1,4}
|I^{1,4}| &= \Big|\int_\po \Big(\nr \na_\alpha \big(2\two_{\beta\gamma}u^\beta
  - \frac{1}{\zeta}u^\gamma\big)\Big)\Big(\nr \na_\alpha \big(-\psi^\alpha - \na_\delta\na_\delta u^\gamma \big)\Big) \ddh\Big| \nonumber\\
&\lesssim \Big|\int_\po \big(\na^{[r-2]}u\big)\big(\na^{[r]}u\big)\ddh\Big| \nonumber\\
&\lesssim \|u\|^2_{H^r(\po)} \lesssim \e \| \na^{r+1}u \|^2_{L^2(\Omega)} + \frac{1}{\e} \|u\|^2_{H^r(\Omega)}.
\end{align}

For $I^{1,5}$, we first substitute in Eq. \eqref{BC in local frame} to derive
\begin{equation*}
I^{1,5} = \int_\po \big(\nr \na_3\na_\beta u^\beta\big) \big(\nr\na_3\na_3 \na_\alpha u^\alpha\big) \ddh.
\end{equation*}
Then, applying the Ricci identity once to the first term and twice to the second term in the integrand, we have
\begin{align}\label{I1,5}
|I^{1,5}| &\lesssim \Big|\int_\po \big(\nr \na_\beta \na_3 u^\beta + \na^{[r-2]}u\big) \big(\nr \na_\alpha \na_3\na_3 u^\alpha + \na^{[r-1]}u\big) \ddh\Big|\nonumber\\
&\simeq \Big|\int_\po \Big(\nr \na_\beta \big(2\two_{\beta\gamma}u^\gamma -\frac{1}{\zeta} u^\gamma \big) + \na^{[r-2]}u\Big) \nonumber\\
&\qquad\qquad \times \Big(\nr \na_\alpha \big(-\psi^\alpha -\na_\delta\na_\delta u^\alpha \big) + \na^{[r-1]}u\Big) \ddh\Big| \nonumber\\
&\lesssim \Big|\int_\po \big(\na^{[r-2]}u\big)\big(\na^{[r]}u\big)\ddh\Big| \lesssim \e \| \na^{r+1}u \|^2_{L^2(\Omega)} +  \frac{1}{\e} \|u\|^2_{H^r(\Omega)},
\end{align}
where the last line follows analogously to the final  inequality in Eq. \eqref{I1,4}.

Finally, for $I^{1,6}$, using Eqs. \eqref{BC in local frame}--\eqref{identities for na 3}, we have
\begin{align}\label{I1,6}
|I^{1,6}| & =\Big|\int_\po \Big(\nr \big\{-\psi^\alpha - \na_\beta\na_\beta u^\alpha \big\}\Big)\Big(\nr \na_3\na_3\big(2\two_{\alpha\gamma} u^\gamma - \frac{1}{\zeta} u^\gamma\big)\Big)\ddh\Big| \nonumber\\
&\lesssim \|\na^{[r-1]} u\|^2_{L^2(\po)} \lesssim \|u\|^2_{H^r(\Omega)}.
\end{align}

Therefore, combining Eqs. \eqref{I1,2}--\eqref{I1,6} all together,
$I^1$ is estimated by
\begin{equation}
|I^1| \lesssim \e \| \na^{r+1}u \|^2_{L^2(\Omega)}  + \frac{1}{\e} \|u\|^2_{H^{r}(\Omega)}.
\end{equation}

{\bf 6.} Now we derive the estimates for the $J$ term.

In fact, $J$ differs from $I$ only by the lower-order terms so that the estimates follow immediately.
More precisely,  notice that
\begin{align}
J & = \int_\po \big(\na_{i_2}\nr \na_{i_{r+1}} u^k\big)\big(\Delta\nr \na_{i_{r+1}} u^k\big) \langle \na_{i_2}, \n\rangle\ddh,
\end{align}
where we have relabelled $\nr = \na_{i_3}\cdots \na_{i_{r+1}}$ as before.
Then, invoking the Ricci identity again, it follows that
\begin{align}
J &\lesssim \int_\po \big(\nr \na_{i_2}\na_{i_{r+1}}u^k + \na^{[r-2]}u\big)\big(\nr \na_{i_{r+1}}\Delta u^k + \na^{[r-1]}u\big) \ddh\nonumber\\
&= \int_\po\big(\nr \na_{i_2}\na_{i_{r+1}}u^k \big)\big(\nr \na_{i_{r+1}}\Delta u^k\big)\ddh+ \int_\po\big(\na^{[r-2]}u\big)\big(\na^{[r-1]}u\big)\ddh\nonumber\\
&\quad + \int_\po\big( \na_{i_{r+1}}\nr\Delta u^k\big)\big(\na^{[r-2]}u\big)\ddh + \int_\po\big(\na^{[r-1]}u\big)\big(\na^{[r-1]}u\big)\ddh \nonumber\\
&=: J^1 + J^2+J^3+J^4.
\end{align}
By the trace, interpolation, and Young's inequalities, again we have
\begin{equation}
|J^2|+ |J^4| \lesssim \|\na^{[r-1]} u\|_{L^2(\po)}^2 \lesssim \e \|\na^{r+1}u\|^2_{L^2(\Omega)} + \frac{1}{\e} \|u\|^2_{H^r(\Omega)}.
\end{equation}
Also, $J^1$ has the same decomposition as $I^1$ into $I^{1,1}, \ldots, I^{1,6}$ so that, by Step 4, we conclude
\begin{equation}
|J^1| \lesssim \e \|\na^{r+1}u\|^2_{L^2(\Omega)} + \frac{1}{\e} \|u\|^2_{H^r(\Omega)}.
\end{equation}
In the end, $J^3$ is estimated via integration by parts again: Since $\po$ is a 2-D surface without boundary,
the divergence theorem yields
\begin{equation}
J^3 = - \int_\po \big(\nr \Delta u^k\big) \big(\na_{i_{r+1}} \na^{[r-2]}u\big)\ddh \simeq \int_\po \big(\na^{[r-1]}u\big) \big(\na^{[r-1]}u\big)\ddh.
\end{equation}
Thus, this verifies the same estimate for $J^4$.

{\bf 7.}
Finally, putting together all the estimates for $I,J$, and $K$ in Steps 1--6, we conclude
\begin{equation}
 \|\na^{r+1} u \|^2_{L^2(\Omega)}  \lesssim \e\|\na^{r+1} u \|^2_{L^2(\Omega)} + \frac{1}{\e} \|u\|^2_{H^r(\Omega)} +  \int_{\Omega}|\na^{r-1}\psi|^2\dx.
\end{equation}
Choose $\e$ sufficiently small so that
\begin{equation}\label{r+1 derivative estimate, in terms of K}
\|\na^{r+1} u \|^2_{L^2(\Omega)}  \lesssim \|u\|^2_{H^r(\Omega)} + K,
\end{equation}
where $K:= {\int_\Omega |\na^{r-1}\psi|^2 \dx}$ as before.
The first term on the right-hand side, $\|u\|^2_{H^r(\Omega)}$, is bounded
by $\sum_{l=0}^r\|\curl^l \, u\|^2_{L^2(\Omega)}$ up to a multiplicative constant,
thanks to the induction hypothesis.

Now, it remains to show that $K$ is bounded by the $L^2$--norm of the iterated curls:
This is achieved by iterating the constructions in Step $1$.
Indeed, relabelling the indices yields
\begin{equation*}
K = \int_\Omega \big(\Delta \p_{i_1} \cdots \p_{i_{r-1}}u^k\big)\big(\Delta \p_{i_1} \cdots \p_{i_{r-1}}u^k\big) \dx.
\end{equation*}
Then, as in Step 1, we integrate by parts twice to compute as
\begin{align}
K=&\,  \int_{\po} \big(\Delta\p_{i_2}\cdots\p_{i_{r-1}}u^k\big) \big(\Delta\p_{i_1}\cdots \p_{i_{r-1}}u^k\big)\langle\p_{i_1},\n\rangle \ddh\nonumber\\
&\,-\int_{\po} \big(\Delta \p_{i_2}\cdots\p_{i_{r-1}}u^k\big) \big(\Delta\Delta\p_{i_3}\cdots \p_{i_{r-1}}u^k\big)\langle\p_{i_2},\n\rangle \ddh \nonumber\\
&\, + \int_\Omega\big(\Delta\Delta \p_{i_3}\cdots\p_{i_{r-1}}u^k\big)\big(\Delta\Delta \p_{i_3}\cdots\p_{i_{r-1}}u^k\big) \dx \nonumber\\
=&\,\frac{1}{2}\int_\po \p_{\n} |\na^{r-2}\Delta u|^2\ddh
 -\int_{\po} \big(\Delta \p_{i_2}\cdots\p_{i_{r-1}}u^k\big) \big(\Delta\Delta\p_{i_3}\cdots \p_{i_{r-1}}u^k\big)\langle\p_{i_2},\n\rangle \ddh\nonumber\\
&\, + \int_\Omega |\Delta\Delta \na^{r-3}|^2 u^2 \dx =: \tilde{I} + \tilde{J} + \tilde{K}.
\end{align}
It is crucial here that the flat gradient $\p_{i_l}$ and flat Laplacian $\Delta$ on $\R^3$ commute.
We notice that
$\tilde{I}$ and $\tilde{J}$ are obtained from $I$ and $J$, respectively,
by taking the trace over a pair of indices, so that they satisfy the same estimates, which are given in Step $5$ above.
Repeating this process for finitely many times, we refine estimate \eqref{r+1 derivative estimate, in terms of K} as
\begin{equation}\label{r+1 derivative estimate, in terms of tilde-K}
\|\na^{r+1} u \|^2_{L^2(\Omega)}  \lesssim \sum_{l=0}^r \|\curl^l \, u\|^2_{L^2(\Omega)} +  \begin{cases}
\int_\Omega |\Delta^{\frac{r+1}{2}}u |^2\dx \quad \text{ if $r$ is odd},\\[1mm]
\int_\Omega |\Delta^{\frac{r}{2}} \na u|^2 \dx \quad \text{ if $r$ is even}.
\end{cases}
\end{equation}

To conclude the proof, we notice that, for the divergence-free vector field $u$,
$\Delta u = -\curl^2 u$.
Thus, Eq. \eqref{r+1 derivative estimate, in terms of tilde-K} gives the desired estimate for odd $r$.
On the other hand, for even $r$, we apply Eq. \eqref{nabla u in L2} in Step 1 of the same proof to
the divergence-free vector field $\Delta^{\frac{r}{2}}u$ to deduce
\begin{equation}
\int_\Omega |\Delta^{\frac{r}{2}} \na u|^2\dx
= \int_\Omega |\na\Delta^{\frac{r}{2}}  u|^2\dx
= \int_\Omega |\curl(\Delta^{\frac{r}{2}}u)|^2\dx
 + \int_\po \two(\Delta^{\frac{r}{2}}u,\Delta^{\frac{r}{2}}u)\ddh,
\end{equation}
where we need the commutativity of divergence, gradient, and curl.
For the first term on the right-hand side,
$\curl(\Delta^{\frac{r}{2}}u) = (-1)^{\frac{r}{2}} \curl^{r+1} u$,
while, for the second term,
\begin{equation}
\int_\po \two(\Delta^{\frac{r}{2}}u,\Delta^{\frac{r}{2}}u)\ddh \lesssim \|\Delta^{\frac{r}{2}}u\|^2_{L^2(\po)}
\simeq \|\na^r u\|^2_{L^2(\po)} \lesssim \e' \|\na^{r+1}\|^2_{L^2(\Omega)} + \frac{1}{\e'} \|u\|^2_{H^r(\Omega)},
\end{equation}
again by the boundedness of the second fundamental form,
as well as the trace and interpolation inequalities.
The proof is then completed by choosing $\e'$ sufficiently small.
\end{proof}

To conclude the section, we emphasize that Theorem \ref{theorem: div-curl estimate} is independent of
the Navier-Stokes equations \eqref{NS equation}.
It is a general property of  divergence-free vector fields satisfying the kinematic and Navier boundary
conditions \eqref{boundary conditions}.
For the Dirichlet boundary condition, Theorem 3.1 also holds, which follows from the divergence-free condition.

\section{Energy Estimates in $H^r$ for Strong Solutions}

In this section, we derive the higher-order energy estimates.
We show that the solution is in the spatial Sobolev space $H^r$ for $r \geq 2$,
provided that the initial data lies in the same space.
This allows us to prove the existence of strong solutions with spatial regularity $H^r$.

For this purpose, our starting point is the existence of {\em weak solutions} to the Navier-Stokes equations \eqref{NS equation}
under the kinematic and Navier boundary conditions.
This can be established, {\it e.g.} via the Galerkin approximation scheme in \cite{cq2}.
We summarize it here for the subsequent developments.
In this section, we drop superscript $\nu$ in solution $u^\nu$
of the Navier-Stokes equations \eqref{NS equation},
since we do not deal with the inviscid limits here.

To begin with,
consider the following vector space direct sum
\begin{equation}
L^2(\Omega; \R^3) = K_2(\Omega) \bigoplus G_2(\Omega)
\end{equation}
for the Hodge (or Helmholtz) decomposition, where $K_2(\Omega)$ is defined in \eqref{3.2a}.
Next, for projection $\proj$ onto the first factor,
we introduce the {\em Stokes operator}:
\begin{equation}
S:=\proj\circ \Delta,
\end{equation}
where $\Delta$ is the flat Laplacian on $\R^3$.
It is shown in \S 4 of \cite{cq2} that $S$ is densely defined on $K_2(\Omega)$ with a compact resolvent.
Thus, it has a discrete spectrum $\lambda_1 \geq \lambda_2 \geq \lambda_3\geq \ldots \downarrow -\infty$,
and the corresponding eigenfunctions $\{a_n\}$ form a complete orthonormal basis of $K_2(\Omega)$.
Now we look at the graded chain of finite-D Hilbert spaces:
\begin{equation}
K_2(\Omega) \supset \ldots \supset V_N := \bigoplus_{j=1}^{N} \R a_j \supset V_{N-1} \supset \ldots \supset V_2 \supset V_1,
\end{equation}
and denote by $\mathbb{P}_N: K_2(\Omega) \rightarrow V_N$ the canonical projection:
\begin{equation}
(\mathbb{P}_N u) (t,x) := \sum_{j=1}^N a_j(x) \int_\Omega \langle a_j(y), u(t,y)\rangle \, \dd y.
\end{equation}
Thus, $\proj$ is indeed the $L^2$-limit of $\mathbb{P}_N$, as $N$ tends to $\infty$.

In \S 5 of  \cite{cq2}, the {\em weak formulation} of Eq. \eqref{NS equation} has been introduced.

\begin{definition}
For $T>0$, we say that $u \in L^2([0,T]; H^1(\Omega; \R^3))$ is a weak solution of the initial boundary problem
\eqref{NS equation}--\eqref{classical Navier boundary condition},
provided that
\begin{enumerate}
\item[\rm (i)]
$u(t,\cdot) \in K_2(\Omega)$ for each $t\in (0,T)${\rm ;}
\item[\rm (ii)]
For each $\phi \in C^\infty([0,T]\times \Omega)$ with $\phi(t,\cdot) \in K_2(\Omega)$,
\begin{align}
&\int_{\Omega}\langle u(T,\cdot), \phi(T,\cdot) \rangle \dx \nonumber\\
&= \int_{\Omega}\langle u_0(x), \phi(0,x)\rangle \dx + \int_0^T \int_\Omega \langle u, \p_t \phi\rangle \dx \,\dd t
  -\int_0^T \int_\Omega \big\langle \curl \, u, (u\times \phi + \nu \, \curl\, \phi)\big\rangle \dx\,\dd t \nonumber\\
&\quad - \frac{\nu}{\zeta}\int_0^T\int_\po \langle u, \phi\rangle \ddh \dt + 2\nu \int_0^T\int_\po \two(u, \phi)\ddh \dt;
\end{align}
\item[\rm (iii)]
The energy inequality holds{\rm :}
\begin{align}\label{energy inequality for NS weak solution}
&\|u(T,\cdot)\|^2_{L^2(\Omega)}+ 2\nu \int_0^T \|\na u(t,\cdot)\|^2_{L^2(\Omega)}\dt
 + 2 \nu \int_0^T \int_{\po}\big(\frac{1}{\zeta} |u|^2 - \two(u,u)\big)\ddh\dt \nonumber\\
&\leq \|u_0\|^2_{L^2(\Omega)}.
\end{align}
\end{enumerate}
\end{definition}

Therefore, by solving the projected equations obtained via taking $\mathbb{P}_N$ to Eq. \eqref{NS equation}
and deriving the {\em a priori} estimates for the finite-D approximate solutions $\{u_N\} \subset  L^2([0,T]; H^1(\Omega; \R^3))$ uniformly in $N$,
we are able to deduce the existence of weak solutions via a compactness argument.
This method is known as the Galerkin approximation scheme, which relies crucially on the spectral analysis
of the Stokes operator $S = \proj\circ\Delta$.

More precisely,  the following result is obtained:

\begin{lemma}[Theorem 5.1 in \cite{cq2}]\label{galerkin}
For any $u_0 \in K_2(\Omega)$ and $T>0$, there exists a weak solution $u \in L^2([0,T]; K_2(\Omega))$ to
the initial-boundary problem \eqref{NS equation}--\eqref{classical Navier boundary condition}.
Such a solution $u$ can be obtained as a weak subsequential limit of the family  of finite-D approximate solutions $\{u_N\}$.
\end{lemma}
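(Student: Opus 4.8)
The plan is to carry out the Galerkin approximation scheme in three stages: construct finite-dimensional approximate solutions, derive uniform-in-$N$ a priori bounds, and pass to the limit by compactness. First I would apply the projection $\mathbb{P}_N$ to \eqref{NS equation} and seek $u_N(t,\cdot)=\sum_{j=1}^N c_j^N(t)\,a_j$ in $V_N$, which recasts the problem as a finite system of ODEs for the coefficients $\{c_j^N\}$. Because the eigenfunctions $a_j$ are smooth and $\Omega$ is bounded, the projected convective, viscous, and boundary terms are locally Lipschitz functions of the $c_j^N$, so the Cauchy--Lipschitz theorem produces a unique solution $u_N$ on a maximal interval $[0,T_N)$.

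Next I would establish the energy estimate, which simultaneously globalizes $u_N$ and yields the uniform bounds. Testing the projected equation against $u_N$ itself, using the cancellation $\langle\curl u_N,\,u_N\times u_N\rangle=0$, and integrating the viscous term by parts through the geometric Navier condition \eqref{boundary conditions}, one arrives at
\[
\frac{1}{2}\frac{\dd}{\dd t}\|u_N\|^2_{L^2(\Omega)}+\nu\,\|\na u_N\|^2_{L^2(\Omega)}+\nu\int_\po\Big(\frac{1}{\zeta}|u_N|^2-\two(u_N,u_N)\Big)\ddh=0.
\]
The curvature term $\two(u_N,u_N)$ carries no definite sign, but since $\|\two\|_{L^\infty(\po)}<\infty$, the Sobolev trace inequality (Lemma \ref{lemma: trace theorem}) together with Young's inequality bounds it by $\e\|\na u_N\|^2_{L^2(\Omega)}+C_\e\|u_N\|^2_{L^2(\Omega)}$; choosing $\e$ small absorbs the gradient part into the viscous dissipation, after which Gr\"onwall's inequality gives $\sup_N\big(\|u_N\|_{L^\infty([0,T];L^2)}+\sqrt{\nu}\,\|u_N\|_{L^2([0,T];H^1)}\big)\leq C(\|u_0\|_{L^2},T)$. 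In particular $T_N=+\infty$, and integrating the identity over $[0,T]$ and passing to the limit with weak lower semicontinuity will reproduce the energy inequality \eqref{energy inequality for NS weak solution}.

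The heart of the argument is the passage to the limit. From the uniform bounds I would extract a subsequence with $u_N\rightharpoonup u$ weakly in $L^2([0,T];H^1(\Omega;\R^3))$ and weak-$\ast$ in $L^\infty([0,T];K_2(\Omega))$. The nonlinear term forces strong compactness: estimating $\p_t u_N$ directly from the projected equation, I would bound it uniformly in $L^{4/3}([0,T];V')$, where $V:=H^1(\Omega;\R^3)\cap K_2(\Omega)$, and then invoke the Aubin--Lions--Simon lemma, using the chain $V\emb L^2(\Omega;\R^3)\emb V'$ with the first embedding compact, to obtain $u_N\to u$ strongly in $L^2([0,T];L^2(\Omega;\R^3))$ along a further subsequence. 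This strong convergence, combined with the weak convergence of $\curl u_N$, lets me pass to the limit in $\int_0^T\!\!\int_\Omega\langle\curl u_N,\,u_N\times\phi\rangle\dx\dt$ for every fixed divergence-free test field $\phi$; the remaining interior and viscous terms pass by weak convergence, while the boundary integrals pass using the strong convergence of traces $u_N|_\po\to u|_\po$ in $L^2(\po)$, a consequence of the compactness of the trace map on $H^1$.

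I expect the main obstacle to lie precisely in these compactness and trace issues rather than in any single algebraic identity. The convective term demands the strong $L^2_t L^2_x$ convergence, which is available only after the Aubin--Lions time-derivative estimate; and the Navier boundary contributions must be controlled uniformly in $N$ through the trace theorem, with the sign-indefinite curvature term $\two(u,u)$ absorbed rather than discarded. Once these are settled, verifying the three defining properties of a weak solution is routine: property (i) holds because each $u_N\in K_2(\Omega)$ and $K_2(\Omega)$ is weakly closed; property (ii) follows from the limit passage above; and property (iii) follows from the weak lower semicontinuity of the $L^2$-norms applied to the integrated energy identity.
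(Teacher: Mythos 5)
Your proposal is correct and takes essentially the same approach as the paper, which does not prove this lemma itself but cites Theorem 5.1 of \cite{cq2}: namely the Galerkin scheme you describe --- projection onto the spectral basis of the Stokes operator $\proj\circ\Delta$, a uniform energy estimate in which the sign-indefinite curvature term $\two(u_N,u_N)$ is absorbed via the trace and Young inequalities, and a compactness (Aubin--Lions) argument to pass to the limit in the convective and boundary terms. Nothing further is needed.
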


Now, taking the weak solution $u$ of the Navier-Stokes equations \eqref{NS equation} constructed by the Galerkin approximation scheme
in Lemma \ref{galerkin} above, we derive the {\em a priori} estimate for the higher-order energy of $u$ in the Sobolev spaces $H^r$
with $r \geq 2$. Indeed, the case, $r=2$, has been proved in Theorem 5.3 of \cite{cq2}.
The higher-order energy estimate is proved by induction on $r$, for which purpose the reduction of order of differentiations
in the boundary terms is essential. This is achieved by exploiting by the kinematic and Navier boundary conditions \eqref{boundary conditions}.
In particular, we need to explore the role of the curl operator, the rotation matrix $\rot$, and the shape operator $\shape$ (see \S 1).

Our main theorem of this section is the following:

\begin{theorem}\label{theorem: higher order energy estimate}
Let $u_0 \in H^r(\Omega; \R^3) \cap K_2(\Omega)$ for some $r \geq 2$.
Then there exists some $T_\star > 0$ such that the weak solution $u\in L^2([0, T_\star); K_2(\Omega))$
of the initial-boundary problem \eqref{NS equation}--\eqref{classical Navier boundary condition}
satisfies
\begin{equation}
\sup_{0\leq t\leq T_\star} \Big(\|u(t,\cdot)\|_{H^r(\Omega)} +  \|\p_t u(t,\cdot)\|_{H^{r-2}(\Omega)}\Big) \leq C,
\end{equation}
where constant $C>0$ depends only on $\zeta$, $\nu$, $\|\two\|_{C^{r-1}(\po)}$, and $\|u_0\|_{H^r(\Omega)}$.
As a consequence, there exists a unique strong solution $u \in C([0,T_\star); H^r(\Omega; \R^3)) \cap C^1([0,T_\star); H^{r-2}(\Omega;\R^3))$.
\end{theorem}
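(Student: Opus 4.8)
The plan is to run an induction on $r$ in parallel with the proof of Theorem~\ref{theorem: div-curl estimate}: the base case $r=2$ is Theorem~5.3 of \cite{cq2}, and I would establish the inductive step that upgrades an $H^r$ bound to an $H^{r+1}$ one for $r\geq 2$. Since the target regularity $r+1\geq 3$ always exceeds $\tfrac52$ in $\R^3$, the embedding $H^{r+1}\emb W^{1,\infty}$ is available for the nonlinear estimates. The organising idea is to work not with $\|u\|_{H^{r+1}(\Omega)}$ directly but with the equivalent energy
\[
E(t):=\sum_{l=0}^{r+1}\|\curl^l u(t,\cdot)\|^2_{L^2(\Omega)},
\]
which dominates $\|u\|^2_{H^{r+1}(\Omega)}$ by Theorem~\ref{theorem: div-curl estimate} (the reverse bound being trivial). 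The decisive advantage of iterated curls is that each application of $\curl$ annihilates the pressure gradient, so that $w_l:=\curl^l u$ with $l\geq 1$ solves a genuine parabolic equation with no pressure term.

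First I would derive, by applying $l$ curls to \eqref{NS equation}, evolution equations $\p_t w_l-\nu\Delta w_l=F_l$, where $F_l$ collects the transport, vortex-stretching, and commutator contributions produced by $\curl^l\big((u\cdot\na)u\big)$. Testing against $w_l$ in $L^2(\Omega)$ gives
\[
\tfrac12\,\tfrac{\dd}{\dt}\|w_l\|^2_{L^2(\Omega)}+\nu\|\na w_l\|^2_{L^2(\Omega)}
=\nu\int_{\po}(\p_{\n}w_l)\cdot w_l\,\ddh+\int_\Omega F_l\cdot w_l\dx.
\]
The interior integral is controlled by Moser-type commutator and product estimates together with $H^{r+1}\emb W^{1,\infty}$: after peeling off the top-order derivative, whose transport part cancels against $w_l$ by incompressibility, one obtains $\int_\Omega F_{r+1}\cdot w_{r+1}\dx\lesssim\|\na u\|_{L^\infty}\,\|u\|^2_{H^{r+1}(\Omega)}\lesssim E^{3/2}$, which has the shape of a Riccati nonlinearity. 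The viscous dissipation $\nu\|\na w_l\|^2_{L^2(\Omega)}$ is retained as a reservoir for the boundary contributions.

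The crux, and the step I expect to be the main obstacle, is the boundary integral $\nu\int_{\po}(\p_{\n}w_l)\cdot w_l\,\ddh$, which carries normal derivatives of high-order curls. Here I would mirror Steps~4--6 of the proof of Theorem~\ref{theorem: div-curl estimate}: pass to an adapted orthonormal frame $\{e_1,e_2,e_3=\n\}$ and use the boundary conditions \eqref{boundary conditions} in their reduced local form \eqref{BC in local frame}--\eqref{identities for na 3} to trade every normal derivative $\na_3$ on $\po$ either for tangential derivatives or for zeroth- and lower-order terms; each exchange is mediated by the Ricci identity and produces curvature factors controlled by $\|\two\|_{C^{r-1}(\po)}$ and slip-length factors $\zeta^{-1}$, which is precisely where $\rot$ and $\shape$ enter. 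The surviving top-order contributions, estimated through the trace embedding $H^{r+1}(\Omega)\emb H^r(\po)$ and interpolation, are of the form $\e\,\|\na^{r+2}u\|^2_{L^2(\Omega)}+\tfrac1\e\,\|u\|^2_{H^{r+1}(\Omega)}$; using $\|\na^{r+2}u\|^2_{L^2(\Omega)}\lesssim\|\na w_{r+1}\|^2_{L^2(\Omega)}+E$ (again by Theorem~\ref{theorem: div-curl estimate}, since $\curl^{r+2}u=\curl\,w_{r+1}$), the genuinely top-order piece is absorbed into the dissipation $\nu\|\na w_{r+1}\|^2_{L^2(\Omega)}$ for $\e$ small relative to $\nu$, while the remainder is folded into the Gronwall term and the induction hypothesis at level $r$. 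This absorption at the threshold $\e\sim\nu$ is the mechanism by which the final constant degenerates as $\nu\to0$, consistent with the $\nu$-dependence asserted in the statement.

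Summing over $0\leq l\leq r+1$ then yields a differential inequality $\tfrac{\dd}{\dt}E\lesssim(1+E)^{3/2}$ with constant depending only on $\nu,\zeta$, and $\|\two\|_{C^{r-1}(\po)}$; an ODE comparison produces a time $T_\star>0$, with an explicit lower bound in terms of $E(0)\simeq\|u_0\|^2_{H^{r+1}(\Omega)}$, on which $E$, and hence $\|u\|_{H^r(\Omega)}$, stays bounded. To make these a priori estimates rigorous I would carry them out on the smooth finite-dimensional Galerkin approximants $u_N$ of Lemma~\ref{galerkin}, obtain bounds uniform in $N$, and pass to the limit, identifying the limit with the weak solution. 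The bound on $\p_t u$ in $H^{r-2}$ is then read off the equation by writing $\p_t u=\proj\big[\nu\Delta u-(u\cdot\na)u\big]$ and invoking the $H^s$-boundedness of the Leray projection $\proj$ on the regular domain $\Omega$ together with the product estimate for $(u\cdot\na)u$. Finally, $u\in L^\infty_tH^r$ with $\p_t u\in L^\infty_tH^{r-2}$ yields $u\in C([0,T_\star);H^{r-2})$ and weak continuity into $H^r$, which I would upgrade to strong continuity $u\in C([0,T_\star);H^r)$ by the standard norm-continuity argument; uniqueness follows from an $L^2$ energy estimate for the difference of two strong solutions, whose boundary terms are exactly those already controlled at the $r=2$ level in \cite{cq2}, closed by Gronwall's inequality.
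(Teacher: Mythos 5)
Your proposal is correct, and it shares the paper's overall skeleton: iterated-curl energies, Theorem \ref{theorem: div-curl estimate} to convert them into full Sobolev norms, the reduced boundary conditions \eqref{BC in local frame}--\eqref{identities for na 3} to tame the viscous boundary integral $\nu\int_\po (\p_\n w_l)\cdot w_l\,\ddh$, an ODE comparison to produce $T_\star$, and Galerkin approximation for rigor. Where you genuinely diverge is at the most delicate point, the convective term. The paper first passes to the rotational form and projects, so that $\q_r=\curl^r u$ solves $\p_t\q_r-\nu\Delta\q_r+\curl^r\proj(u\times\omega)=0$; it then integrates by parts once, splitting the nonlinearity into an interior term estimated by the Banach-algebra property of $H^{r-1}$ (no $L^\infty$ gradient bound needed, whence the quadratic inequality \eqref{ODineq:energy}) and a boundary term $\int_\po\langle\curl^{r-1}\proj(u\times\omega)\times\q_r,\n\rangle\,\ddh$, whose control is the heart of the paper's proof: it requires the order-reduction identity \eqref{identity for boundary, order reduction}, i.e.\ that the iterated curls of $\proj(u\times\omega)$ themselves satisfy a Navier-type boundary condition, established by induction via the commutation relation \eqref{commutation relation for pi curl rot}. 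You instead keep the convective form, peel off the top-order transport part $(u\cdot\na)w_{r+1}$ --- which vanishes against $w_{r+1}$ by incompressibility and the kinematic condition, with no boundary contribution --- and control the commutator by Kato--Ponce/Moser bounds; this bypasses identity \eqref{identity for boundary, order reduction} entirely, at the price of invoking $H^{r+1}\emb W^{1,\infty}$ (available since $r+1\geq 3>\tfrac52$) and accepting a cubic Riccati inequality $E'\lesssim (1+E)^{3/2}$ rather than a quadratic one; either closes the local-in-time argument. Your treatment of $\p_t u$ is also simpler and sound: reading $\p_t u=\proj\big[\nu\Delta u-(u\cdot\na)u\big]$ off the equation and using the $H^s$-boundedness of $\proj$ (Schwarz \cite{Schwarz}) gives the $H^{r-2}$ bound directly, where the paper re-runs the entire energy argument for $\p_t\q_r$. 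Two caveats you share with, rather than resolve beyond, the paper: the boundary machinery of Theorem \ref{theorem: div-curl estimate} is written for flat iterated gradients of $u$, so one must note that $\curl^l u$ is a fixed linear combination of such derivatives before invoking it for $w_l$; and carrying the estimates out on the Galerkin approximants requires dealing with the non-commutation of $\curl^l$ with $\mathbb{P}_N$ (e.g.\ by testing with powers of the Stokes operator), a point both arguments leave implicit.
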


\begin{proof}
We divide the arguments in six steps.
In Step 1, we set up the equations for the energy estimate. Then, in Steps 2--5,
we control $\|u(t,\cdot)\|_{H^r(\Omega)}$ and specify the lifespan, $T_\star$.
Finally, in Step 6, we derive the energy estimate for $\p_t u$.

{\bf 1.}
We first deduce the evolution equation for the iterated curls of the velocity field $u$.
For this purpose, we apply the divergence-free projection $\proj: L^2(\Omega; \R^3) \rightarrow K_2(\Omega)$
to the Navier-Stokes equations \eqref{NS equation} to obtain
\begin{equation}\label{projected Navier-Stokes}
\p_t u - \nu \Delta u + \proj (u \cdot \na u) = 0.
\end{equation}
On the other hand, we have the following vectorial identity in 3-D:
\begin{equation*}
u \cdot \na u = \frac{1}{2} \na (|u|^2) - u \times \omega,
\end{equation*}
so that the projected Navier-Stokes equations \eqref{projected Navier-Stokes} are equivalent to
\begin{equation}\label{eq: u}
\p_t u - \nu \Delta u + \proj (u \times \omega) = 0.
\end{equation}
Here and in the sequel, we view $\mathbb{P}_\infty$ as extended to the bounded projection
operator from $H^r(\Omega; \R^3)$ to $ H^r(\Omega; \R^3)\cap K_2(\Omega)$.
This follows from the generalized Hodge decomposition theory on the manifolds with boundaries
subject to the kinematic boundary condition; see Theorem 2.4.2
in Schwarz \cite{Schwarz}.
Then the Stokes' operator:
$$
S := \mathbb{P}_\infty \circ \Delta
$$
gives rise to a densely defined, closable, self-adjoint bilinear form on $H^r(\Omega;\R^3)\cap K_2(\Omega)$:
\begin{equation}
E_r(u,w) := -\int_\Omega \sum_{0 \leq |\alpha| \leq r} \big\langle \nabla^\alpha Su, \nabla^\alpha w \big\rangle \,{\rm d}x.
\end{equation}
In particular, the spectral analysis in Sections 4.1--4.2 in \cite{cq2}
also carries through in our setting to $H^r(\Omega;\R^3)$.

For simplicity of presentation, we use the following abbreviation:
\begin{equation}\label{Psi}
\Psi:= \proj (u\times \omega).
\end{equation}
Then, taking the iterated curls to Eq. \eqref{eq: u}, we obtain the evolution equation:
\begin{equation}\label{eq: evolution of q_r}
\p_t \q_r - \nu \Delta\q_r + \curl^r \, \Psi = 0,
\end{equation}
where and in the sequel, we denote
\begin{equation}
\q_r := \curl^r \, u.
\end{equation}

To derive the energy estimate, we multiply $\q_r$ to Eq. \eqref{eq: evolution of q_r} and integrate over $\Omega$ to obtain
\begin{equation}\label{eq:x}
0=\frac{1}{2} \frac{d}{dt}\int_\Omega |\q_r|^2 \dx - \nu \int_\Omega \langle \q_r, \Delta \q_r \rangle \dx + \int_\Omega \langle \q_r, \curl^r \, \Psi \rangle \dx.
\end{equation}
We integrate the last two terms by parts. For the second term, we have
\begin{equation*}
\int_\Omega \langle \q_r, \Delta \q_r \rangle \dx = \int_\po \langle (\q_r \cdot \na) \q_r, \n\rangle \ddh - \int_\Omega|\na \q_r|^2 \dx.
\end{equation*}
For the final term, notice that, for any $3$-D vector fields $V$ and $W$,
\begin{align}\label{integration by parts, for curl}
\int_\Omega \langle V, \curl\, W \rangle \dx & = \int_\Omega V^k \e^{ijk}\p_i W^j \dx \nonumber  \\
&= \int_\po \e^{ijk} V^k W^j \langle \p_i, \n\rangle \ddh - \int_\Omega \e^{ijk} W^j (\p_i V^k) \dx\nonumber \\
&= \int_\po \langle W \times V, \n \rangle \ddh + \int_\Omega \langle \curl\, V, W \rangle \dx.
\end{align}
As a result,
\begin{equation*}
\int_\Omega \langle \q_r, \curl \circ \curl^{r-1} \Psi \rangle \dx = \int_{\po} \langle\curl^{r-1}\, \Psi \times \q_r, \n \rangle \ddh + \int_\Omega \langle \curl\, \q_r , \curl^{r-1}\,\Psi  \rangle \dx,
\end{equation*}
so that Eq. \eqref{eq:x} can be written as
\begin{align}\label{eq: higher order energy, basic equation}
&\frac{1}{2}\frac{d}{dt} \int_\Omega |\q_r|^2 \dx + \nu \int_\Omega |\na \q_r|^2 \dx  \nonumber\\
&= \nu  \int_\po \langle (\q_r \cdot \na) \q_r, \n\rangle \ddh - \int_{\po} \langle\curl^{r-1}\, \Psi \times \q_r, \n \rangle \ddh - \int_\Omega \langle \curl\, \q_r , \curl^{r-1}\,\Psi  \rangle \dx \nonumber\\
&=I+J+K.
\end{align}

Our task is to estimate each of terms $I,J$, and $K$.
Since the case, $r=2$, has been established in Theorem 5.3 of \cite{cq2},
in the sequel, we assume the result for $r-1$ and prove it for $r$ by induction, with $r\geq 3$.

{\bf 2.} For $I$ in Eq. \eqref{eq: higher order energy, basic equation}, observe that
\begin{equation}
I = \frac{\nu}{2} \int_\po \p_\n |\q_r|^2 \ddh,
\end{equation}
which has been treated in the proof of Theorem \ref{theorem: div-curl estimate}.
Indeed, it coincides with $I$ in Eq. \eqref{eq: IJK} up to a constant $\nu$.
Utilizing the estimates in Steps 2--4 of the proof therein, we have
\begin{equation}\label{I for higher order energy}
|I| \lesssim \e \|\na \q_r\|_{L^2(\Omega)}^2 + \frac{1}{\e}\|u\|^2_{H^r(\Omega)}.
\end{equation}

{\bf 3.} To prove for term $K$ in Eq. \eqref{eq: higher order energy, basic equation},
we first notice that, by the Cauchy-Schwarz inequality and Young's inequality,
\begin{align}\label{K for higher order energy}
|K| & \leq \|\curl \, \q_r\|_{L^2(\Omega)} \|\curl^{r-1} \, \Psi\|_{L^2(\Omega)} \nonumber\\
&\lesssim \e \|\na \q_r\|^2_{L^2(\Omega)} + \frac{1}{\e} \|\curl^{r-1}\,(\proj (u \times \omega))\|^2_{L^2(\Omega)}.
\end{align}
Since $H^s(\Omega)$ for $s > \frac{3}{2}$ is a Banach algebra on $\R^3$ and $\proj$ is a bounded linear operator,
we have
\begin{equation}\label{curl Psi}
 \|\curl^{r-1} \, (\proj (u \times \omega))\|_{L^2(\Omega)} \lesssim \|u\|^2_{H^r(\Omega)}.
\end{equation}
This gives us the estimate for $K$.

{\bf 4.} Now it remains to control the boundary term $J$ in Eq. \eqref{eq: higher order energy, basic equation}:
\begin{equation}
J:= \int_\po \langle \curl^{r-1} (\proj (u \times \omega)) \times \q_r, \n \rangle \ddh.
\end{equation}
It is crucial to reduce the order of differentiation by using the boundary conditions \eqref{boundary conditions}.
For this purpose, we establish the following {\em identity} on $\partial\Omega$:
\begin{align}\label{identity for boundary, order reduction}
&\pi\big\{\curl^k (\proj(u\times \omega))\big\} \nonumber\\
&=-\frac{1}{\zeta}  \rot \circ \pi \big\{\curl^{k-1}(\proj(u\times \omega))\big\}
+ 2\rot\circ \pi \big\{\curl^{k-1}\,\shape (\pi \circ \proj(u \times \omega))\big\}.
\end{align}
We recall that $\rot$ is the orthogonal matrix rotating in the $(x,y)$--plane anti-clockwise by $90$ degrees,
$\shape$ is the shape operator corresponding to the second fundamental form $\two$,
and operator $\pi$ denotes the projection onto the tangential components of a vector field,
\begin{equation}
\pi(V) := V-\langle V, \n \rangle
\end{equation}
viewed either as a 2-D vector or a 3-D vector with zero $x_3$--component.
Here it suffices to consider the tangential components, since $\langle \n \times \q_r, \n \rangle \equiv 0$.

The above {\em identity} is proved by induction. The base step $k=1$ is shown in the computations preceding Eq. (5.21) in \cite{cq2}.
Now we assume the result for $k$. Then, by the induction hypothesis,
\begin{align}\label{boundary curl 1}
&\pi\big\{\curl^{k+1} \,(\proj(u\times \omega))\big\} \nonumber\\
&= \pi\big\{\curl\circ \curl^k  \,(\proj(u\times \omega))\big\} \nonumber\\
&= \pi\circ \curl \big\{-\frac{1}{\zeta} \rot\circ\pi\circ \curl^{k-1} \,(\proj(u \times \omega))
   + 2 \rot\circ\pi\circ \curl^{k-1}\,\shape \, (\pi\circ\proj (u\times \omega)) \big\}\nonumber\\
&=: \pi\circ \curl \big\{ -\frac{1}{\zeta} \rot\circ\pi\circ \curl^{k-1} \,\Psi
  + 2\rot\circ\pi\circ\curl^{k-1} ( \shape \circ \pi(\Psi)) \big\},
\end{align}
where we recall the short-hand notation $\Psi$ in Eq. \eqref{Psi}.
Here and throughout, for a 2-D vector field $W=(W^1, W^2)^{\top}$ ({\it e.g.} $W=\shape \circ \pi(\Psi)$),
we define its curl as $\curl\,W := \curl\,(W^1, W^2, 0)^\top$.
It suffices to check that
\begin{equation}\label{commutation relation for pi curl rot}
\pi \circ \curl \circ \rot \circ \pi (V) = \rot \circ \pi \circ \curl\, V \qquad \text{ for any } V\in T(\po).
\end{equation}
From here, Eq. \eqref{boundary curl 1} implies
\begin{equation}
\pi\big\{\curl^{k+1} (\proj(u\times \omega))\big\} = -\frac{1}{\zeta} \rot\circ\pi \circ\curl^k \,\Psi
+ 2 \rot\circ\pi \circ \curl^k \,(\shape \circ \pi(\Psi)).
\end{equation}

Indeed, we observe
\begin{align*}
\pi \circ \curl \circ \rot \circ \pi \begin{bmatrix}
V^1\\
V^2\\
V^3
\end{bmatrix} = \pi \circ \curl \begin{bmatrix}
-V^2\\
V^1\\
0
\end{bmatrix} = \pi \begin{bmatrix}
-\na_3 V^1\\
-\na_3 V^2\\
0
\end{bmatrix} = \begin{bmatrix}
-\na_3 V^1\\
-\na_3 V^2
\end{bmatrix},
\end{align*}
and
\begin{align*}
\rot \circ \pi \circ \curl \begin{bmatrix}
V^1\\
V^2\\
V^3
\end{bmatrix} = \rot \circ \pi \begin{bmatrix}
\na_2 V^3 -\na_3 V^2\\
\na_3 V^1 - \na_1 V^3\\
\na_1 V^2 - \na_2 V^1
\end{bmatrix} = \rot \begin{bmatrix} -\na _3 V^2\\
\na_3 V^1
\end{bmatrix} = \begin{bmatrix}
-\na_3 V^1\\
-\na_3 V^2
\end{bmatrix},
\end{align*}
since $V^3 =0$.
Therefore, Eq. \eqref{commutation relation for pi curl rot} is proved,
and the {\em identity} in Eq. \eqref{identity for boundary, order reduction}
follows by induction.

Now, in view of the above identity, $J$ can be expressed as
\begin{align}\label{J, order reduced}
J &= \int_\po \big\langle\Big\{  -\frac{1}{\zeta} \rot\circ \pi( \curl^{r-2}\, (\proj (u \times \omega)))\nonumber\\
 &\qquad\qquad+ 2\rot\circ \pi\circ\curl^{r-2}\,\big(\shape \circ \pi\circ\proj(u\times \omega)\big)\Big\}\times \q_r, \n\big\rangle \ddh.
\end{align}
The crucial observation is that only the derivatives up to the $(r-1)$-th order of $u$ are involved.
This is because $\shape$ has a bounded norm in $C^{r-1}$ owing to the assumption of bounded extrinsic geometry,
and $\proj$, $\pi$, and $\rot$ are all smooth operators with the operator norm bounded by a universal constant.
Then we arrive at the following estimates:
\begin{align}\label{J for higher order energy}
|J|&\lesssim \|\curl^{r-2}(u \times \omega)\|_{L^2(\po)} \|\q_r\|_{L^2(\po)} \nonumber\\
&\lesssim \|u\|^2_{H^{r-1}(\po)} \|u\|_{H^r(\po)} \nonumber\\
&\lesssim \Big(\e \|u\|^2_{H^r(\Omega)} + \|u\|^2_{H^{r-1}(\Omega)}\Big)    \Big(\e\|u\|_{H^{r+1}(\Omega)} + \|u\|_{H^r(\Omega)}\Big)\nonumber\\
&\simeq \e^2 \|u\|^2_{H^r(\Omega)}\|u\|_{H^{r+1}(\Omega)} + \e \|u\|^3_{H^r(\Omega)} + \e \|u\|^2_{H^{r-1}(\Omega)}\|u\|_{H^{r+1}(\Omega)} + \|u\|^2_{H^{r-1}}\|u\|_{H^r(\Omega)}\nonumber\\
&\lesssim (\e^2+\e) \|u\|^2_{H^{r+1}(\Omega)} + \|u\|^4_{H^r(\Omega)}.
\end{align}
In the above, the first line follows from the Cauchy-Schwarz inequality,
the second line follows from the argument as for Eq. \eqref{curl Psi},
the third line holds by the Sobolev trace inequality,
and the final line follows by the interpolation and Young's inequalities.

{\bf 5.}
Now, combining the estimates in Steps 2--4 for $I$, $J$, and $K$ (especially  Eqs. \eqref{I for higher order energy}--\eqref{curl Psi}
and \eqref{J for higher order energy}),
Eq. \eqref{eq: higher order energy, basic equation} becomes
\begin{align}
&\frac{1}{2}\frac{d}{dt}\int_{\Omega} |\q_r|^2 \dx + \nu \int_\Omega |\na \q_r|^2\dx \nonumber\\
&\lesssim  \e \|\na^{r+1}u\|^2_{L^2(\Omega)} + \frac{1}{\e}\|u\|^2_{H^r(\Omega)}+ \e \|\na \q_r\|^2_{L^2(\Omega)}
+ (\e+\e^2) \|u\|^2_{H^{r+1}(\Omega)} + (1+\frac{1}{\e})\|u\|^4_{H^r(\Omega)},\,\,
\end{align}
where, in light of Theorem \ref{theorem: div-curl estimate}, $\|\na^{r+1}u\|_{L^2(\Omega)} \simeq \|\na \q_r\|_{L^2(\Omega)} \simeq \|\curl^{r+1}\, u\|_{L^2(\Omega)}$,
and similarly $\|\q_r\|_{L^2(\Omega)} \lesssim \|u\|_{H^r(\Omega)}$.
Then, choosing $\e$ suitably small in comparison with $\nu$ and considering the energy at the $r$-th order:
\begin{equation}
E_r:=\|\q_r\|^2_{L^2(\Omega)},
\end{equation}
we obtain the following differential inequality:
\begin{equation}\label{ODineq:energy}
E_r'(t)\leq E_r'(t)+ \nu E_{r+1} \leq ME_r(t)+ ME_r(t)^2,
\end{equation}
where $M$ depends on $\nu$, $\zeta$, and $\|\two\|_{C^{r-1}(\Omega)}$.

To proceed, consider the auxiliary Cauchy problem for ODE:
\begin{equation}\label{ODE}
\begin{cases}
A'(t) = M\big(A(t)+A(t)^2\big),\\
A(0) = E_r(0)+\eta
\end{cases}
\end{equation}
for arbitrary $\eta >0$. It is solved explicitly by
\begin{equation*}
A(t) =\frac{\big(\eta+E_r(0)\big) e^{Mt}}{1-\big(\eta+E_r(0)\big)\big(e^{Mt}-1\big)},
\end{equation*}
so that, for any $t>0$ before the blowup time:
\begin{equation}\label{blowup time T-star}
T_\star = \frac{1}{M} \log\Big(1+\frac{1}{\eta+E_r(0)}\Big) > 0,
\end{equation}
we see that $A(t) < \infty$.
Comparing the differential inequality \eqref{ODineq:energy} with the ODE in \eqref{ODE},
we find that $E_r(t) \leq A(t)$ for all $0 \leq t < T_\star$.
In particular, since $\eta>0$ is arbitrary,
the upper bound for $A$ (hence for $E_r$) is controlled by $E_r(0) := \|u_0\|_{H^r(\Omega)}^2$ and $M$.
This implies
\begin{equation}\label{E_r(t) bounded}
\sup_{0\leq t <T_\star}E_r(t) \leq C.
\end{equation}

{\bf 6.} It remains to derive the $L^2$--estimate for $\p_t\q_r$.
To this end, we take $\p_t$ to Eq. \eqref{eq: evolution of q_r}, multiply by $\p_t \q_r$, and then integrate over $\Omega$ to obtain
\begin{equation}
\frac{1}{2}\frac{d}{dt}\int_\Omega |\p_t \q_r|^2 \dx - \nu \int_\Omega\langle\Delta\p_t \q_r, \p_t \q_r\rangle \dx + \int_\Omega \langle\curl^r \, \p_t \Psi, \p_t\q_r\rangle \dx  = 0.
\end{equation}
Applying integration by parts and the divergence theorem to the last two terms ({\it cf.} Eq. \eqref{integration by parts, for curl} for curl), we arrive at
\begin{align}\label{tilde I,J,K, higher order energy estimate}
&\frac{1}{2}\frac{d}{dt}\int_\Omega |\p_t \q_r|^2\dx + \frac{\nu}{2} \int_\Omega |\na \p_t \q_r|^2\dx \nonumber\\
&= \nu\int_\po \big\langle (\na \p_t \q_r\cdot\p_t\q_r), \n\big\rangle \ddh + \int_\po \langle \curl^{r-1}\,\p_t \Psi \times \p_t\q_r, \n\rangle\ddh \nonumber\\
&\quad + \int_\Omega \langle\curl^{r-1}\,\p_t\Psi,\curl\,\p_t\q_r\rangle\dx =: \tilde{I}+\tilde{J}+\tilde{K}.
\end{align}

Now, it is crucial to observe the following:
Eq. \eqref{eq: higher order energy, basic equation} differs from Eq. \eqref{tilde I,J,K, higher order energy estimate} only by the time derivatives.
More precisely, if we change variables $(\p_t \q_r, \p_t \Psi)$ in the terms, $\tilde{I}, \tilde{J}$, and $\tilde{K}$, in Eq. \eqref{tilde I,J,K, higher order energy estimate}
to $(\q_r, \Psi)$, then $I,J$, and $K$ in Eq. \eqref{eq: higher order energy, basic equation} are immediately recovered.

Furthermore, since the spatial derivatives commute with $\p_t$,
the integration by parts arguments in Steps 2--5 above all carry through.
Therefore, given that $\|\p_t u(t,\cdot)\|_{L^2(\Omega)}^2 \leq C$ for $u_0 \in H^2(\Omega;\R^3)$ which has been established in Theorem 5.3 of \cite{cq2},
we repeat the arguments above for $(\p_t\q_r, \p_t \Psi)$ to deduce
\begin{equation}\label{Er-2(t) bounded}
\sup_{0\leq t< T_\star}\|\p_t u(t,\cdot)\|^2_{H^{r-2}(\Omega)} \leq C,
\end{equation}
where $C$ depends on $\|u_0\|^2_{H^r(\Omega)}$, $\zeta, \nu, \|\two\|_{C^{r-1}(\Omega)}$, and $T_\star$ that is the same blowup time as in Step 5.

Therefore, combining the estimates in Eqs. \eqref{E_r(t) bounded} and \eqref{Er-2(t) bounded}, we conclude the proof.
\end{proof}

As a corollary, if the initial energy $E_r(0)$ of the fluid is uniformly bounded for all $r\in \mathbb{N}$,
Eq. \eqref{blowup time T-star} implies that there exists a uniform life span $T_\star > 0$ for all levels of the kinetic energy.
Therefore, in view of $C^\infty(\Omega; \R^3)=\bigcap_{r \in \mathbb{N}} H^r(\Omega; \R^3)$, we have

\begin{corollary}\label{cor: smooth solution}
Let $u_0\in C^\infty(\Omega; \R^3)$ be a divergence-free vector field,
and let domain $\Omega \subset \R^3$ be of smooth second fundamental form.
Then there exists $T_\star > 0$ such that the initial-boundary value
problem \eqref{IDC}--\eqref{classical Navier boundary condition}
for the Navier-Stokes equations \eqref{NS equation}
has a smooth solution $u\in C^1([0,T_\star); C^\infty(\Omega; \R^3))$
satisfying the kinematic and Navier boundary conditions \eqref{boundary conditions}.
\end{corollary}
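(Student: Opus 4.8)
The plan is to promote the finite-regularity statement of Theorem \ref{theorem: higher order energy estimate} to full smoothness by running the energy estimate at every level $r$ simultaneously and extracting a single, $r$-independent lifespan. First I would record that, since $u_0\in C^\infty(\Omega;\R^3)=\bigcap_{r\in\mathbb{N}}H^r(\Omega;\R^3)$ is divergence-free, we have $u_0\in H^r(\Omega;\R^3)\cap K_2(\Omega)$ for \emph{every} $r\geq 2$. Hence Theorem \ref{theorem: higher order energy estimate} applies at each level $r$ and yields a unique strong solution $u^{(r)}\in C([0,T_\star^{(r)});H^r(\Omega;\R^3))\cap C^1([0,T_\star^{(r)});H^{r-2}(\Omega;\R^3))$, where $T_\star^{(r)}$ is the blow-up time of the comparison ODE \eqref{ODE}, given explicitly by \eqref{blowup time T-star}. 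By the uniqueness assertion of that theorem, any two of these solutions agree on the overlap of their intervals of existence, so they are all restrictions of one and the same solution $u$, which already satisfies the kinematic and Navier boundary conditions \eqref{boundary conditions} on each $[0,T_\star^{(r)})$.

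The crux is to show that $T_\star:=\inf_{r\geq 2}T_\star^{(r)}>0$. As written, \eqref{blowup time T-star} gives a lifespan governed by $E_r(0)=\|u_0\|_{H^r(\Omega)}^2$ and by the constant $M=M_r$ depending on $\|\two\|_{C^{r-1}(\po)}$; for general smooth data both of these grow with $r$, so the naive bound $T_\star^{(r)}$ degenerates to $0$ as $r\to\infty$. The remedy is to revisit the nonlinear estimates for the terms $J$ and $K$ in the proof of Theorem \ref{theorem: higher order energy estimate} and re-derive the differential inequality so that the top-order energy $E_r$ enters only \emph{linearly}, with a coefficient controlled by a \emph{fixed} low-order norm. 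Concretely, in products such as $\curl^{r-2}(u\times\omega)$ one places the top-order derivatives on a single factor and bounds the other factor in $L^\infty\hookrightarrow H^s$ for a fixed $s>\frac52$, via Moser-type/Kato--Ponce product and commutator estimates, while Theorem \ref{theorem: div-curl estimate} is used (as in Step 5) to absorb the resulting $E_{r+1}$ into the viscous dissipation. This upgrades \eqref{ODineq:energy} to an inequality of the schematic form
\[
E_r'(t)+\nu E_{r+1}(t)\;\leq\; M\big(1+\|u(t,\cdot)\|_{H^s(\Omega)}^2\big)E_r(t)+M\,\|u(t,\cdot)\|^2_{H^{r-1}(\Omega)}\|u(t,\cdot)\|_{H^r(\Omega)},
\]
with $M$ now independent of $r$. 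Grönwall's inequality, together with an induction on $r$ that keeps the lower-order factor $\|u\|_{H^{r-1}}$ already bounded, then shows that on any interval on which the single fixed norm $\|u\|_{H^s}$ stays finite — in particular on $[0,T_\star^{(s)})$ — every $E_r$ remains bounded. Thus one may take the uniform lifespan $T_\star:=T_\star^{(s)}>0$.

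With such a $T_\star$ in hand, one has $\sup_{0\leq t<T_\star}\|u(t,\cdot)\|_{H^r(\Omega)}<\infty$ for every $r$, and likewise $\sup_{0\leq t<T_\star}\|\p_t u(t,\cdot)\|_{H^{r-2}(\Omega)}<\infty$ by the Step 6 estimate \eqref{Er-2(t) bounded} carried out at each level. Since $\bigcap_r H^r(\Omega;\R^3)=C^\infty(\Omega;\R^3)$, it follows that $u(t,\cdot)\in C^\infty(\Omega;\R^3)$ for each $t\in[0,T_\star)$; and reading the equation \eqref{eq: u} as $\p_t u=\nu\Delta u-\proj(u\times\omega)$ together with the continuity of $u$ in every $H^r$ gives $\p_t u\in C([0,T_\star);C^\infty(\Omega;\R^3))$, so that $u\in C^1([0,T_\star);C^\infty(\Omega;\R^3))$, as claimed. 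I expect the main obstacle to be exactly the second paragraph: making the top-order energy appear linearly with an $r$-independent coefficient, so that the lifespan does not shrink as $r\to\infty$. This is where the combination of the div-curl estimate of Theorem \ref{theorem: div-curl estimate} and the Moser/commutator structure of the boundary and interior nonlinear terms is indispensable; the remaining passages are routine once the uniform lifespan is secured.
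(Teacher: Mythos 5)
Your proposal is correct in substance but follows a genuinely different route from the paper. The paper's own proof is a two-line observation: it invokes the explicit blow-up time \eqref{blowup time T-star} and asserts that, when the initial energies $E_r(0)$ are uniformly bounded in $r$, the lifespans admit a uniform positive lower bound, whence smoothness follows from $C^\infty(\Omega;\R^3)=\bigcap_{r}H^r(\Omega;\R^3)$. You correctly identify that this is precisely the weak point: for a general smooth $u_0$ neither $E_r(0)=\|u_0\|^2_{H^r(\Omega)}$ nor the constant $M=M_r$ in \eqref{ODineq:energy} (which carries $\|\two\|_{C^{r-1}(\po)}$) is uniformly bounded in $r$, so the naive $\inf_r T_\star^{(r)}$ could a priori vanish; strictly speaking, the paper's sketch proves the corollary only under the implicit extra hypotheses $\sup_r E_r(0)<\infty$ and $\sup_r M_r<\infty$. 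Your remedy --- re-deriving the energy inequality in tame form, with the top-order energy $E_r$ entering linearly with coefficient controlled by a fixed norm $\|u\|_{H^s(\Omega)}$, $s>\tfrac{5}{2}$, then propagating regularity by Gr\"{o}nwall and induction on $r$ over the fixed interval $[0,T_\star^{(s)})$ --- is the standard persistence-of-regularity argument and does close this gap: the interior terms obey Kato--Ponce/Moser bounds, and the problematic boundary term (estimated in \eqref{J for higher order energy} by $\|u\|^4_{H^r(\Omega)}$, the source of the quadratic term in \eqref{ODineq:energy}) can instead be bounded by $\e\|u\|^2_{H^{r+1}(\Omega)}+C\e^{-1}\big(1+\|u\|^2_{H^s(\Omega)}\big)E_r$, with the first piece absorbed into the viscous dissipation via Theorem \ref{theorem: div-curl estimate}. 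One correction, though: your insistence that the refined constant $M$ be independent of $r$ is both unattainable (the boundary reduction differentiates the shape operator up to order $r-2$, so $\|\two\|_{C^{r-1}(\po)}$ necessarily enters and grows with $r$ for a general smooth surface, as do the order-$r$ product-estimate constants) and unnecessary --- Gr\"{o}nwall propagates finiteness of each $E_r$ on $[0,T_\star^{(s)})$ for any $r$-dependent constant, since only linearity in $E_r$ matters; $r$-uniformity of the constant would be needed only for a bound uniform in $r$, which the corollary does not claim. With that adjustment your argument is complete and proves the corollary exactly as stated, which is more than the paper's own justification literally delivers.
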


\section{Inviscid Limit}

In this section, we establish the inviscid limit from the Navier-Stokes equations
under the kinematic and Navier boundary conditions \eqref{boundary conditions}
to the Euler equations under the no-penetration condition \eqref{boundary condition for Euler}.
The existence and uniqueness of $u$, the strong solution of the Euler equations \eqref{Euler}
satisfying the no-penetration
boundary condition \eqref{boundary condition for Euler}, have been known ({\it cf}. Ebin-Marsden \cite{EMa}).
We obtain the convergence in the Sobolev spaces $H^r$, $r > \frac{5}{2}$, via strong compactness arguments.

To this end, {\em a priori} estimates of the evolution equations for $u^\nu-u$, {\it i.e.}
the difference between the Navier-Stokes solution and the Euler solution, are required.
In particular, technicalities are involved in the estimates for the higher-order Sobolev norms
of the nonlinear terms, for instance, the iterated curls of $(u^\nu - u) \cdot \na (u^\nu - u)$.
To deal with the nonlinearities, we need to make full use of the incompressibility condition
of $u^\nu$ and $u$, as well as the kinematic and Navier boundary conditions \eqref{boundary conditions}.

The main theorem of this section is stated as follows:

\begin{theorem}[Inviscid Limit]\label{theorem: inviscid limit}
Let $u\in C([0, T_\star); H^{r+1}(\Omega;\R^3)\cap K_2(\Omega))$ be the unique strong solution of the incompressible Euler equations \eqref{Euler}
subject to the no-penetration boundary condition \eqref{boundary condition for Euler}.
Then there exists some $\nu_\star=\nu_\star(T_\star, \int_0^{T_\star} \|u(t,\cdot)\|^2_{H^{r+1}(\Omega)})$ such that,
whenever $0<\nu \leq \nu_\star$, the strong solution of the Navier-Stokes equations \eqref{NS equation}
with the kinematic and Navier boundary conditions \eqref{boundary conditions} exists
in $L^\infty([0,T_\star);H^r(\Omega;\R^3))$ for $r>2$.
Moreover, if $r>\frac{5}{2}$, then there exists a constant $C$ depending only on
$T_\star$, $\|u\|_{L^2([0,T_\star); H^{r+1}(\Omega))}$, and $\|\two\|_{C^r(\po)}$
such that
\begin{equation}
\sup_{0 \leq t <T_\star} \|u^\nu(t,\cdot) - u(t,\cdot)\|_{H^{r}(\Omega)} \leq C \nu^{\frac{1}{3}}.
\end{equation}
In particular, as $\nu \rightarrow 0^{+}$, $u^\nu$ converges to $u$ in $H^{r}(\Omega)$
uniformly in $t\in [0, T_\star)$.
\end{theorem}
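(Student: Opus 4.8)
The plan is to set $w:=u^\nu-u$ and to run the iterated-curl energy method of \S 3--\S 4 on $w$, keeping careful track of the error generated by the Euler solution. First I would subtract \eqref{Euler} from \eqref{NS equation} and apply the Leray projection $\proj$ to kill the pressure difference; using $u^\nu\times\omega^\nu-u\times\omega=w\times\omega^\nu+u\times\curl w$ with $\omega^\nu=\curl u^\nu$, this gives the schematic evolution equation
\begin{equation*}
\p_t w-\nu\Delta w+\proj\big(w\times\omega^\nu+u\times\curl w\big)=\nu\,\Delta u .
\end{equation*}
Both $w$ and $u$ are divergence-free with $w\cdot\n=0$ on $\po$; the essential new difficulty is that $w$ obeys \emph{only} the kinematic boundary condition and \emph{not} the Navier condition, since the Euler field $u$ does not. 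I would then set $\q_l:=\curl^l w$, apply $\curl^r$, pair with $\q_r$ in $L^2(\Omega)$, and integrate by parts exactly as in Steps 1--6 of Theorem \ref{theorem: higher order energy estimate}, arriving at
\begin{equation*}
\frac12\frac{d}{dt}\|\q_r\|_{L^2(\Omega)}^2+\nu\|\na\q_r\|_{L^2(\Omega)}^2=\text{(bdry)}+\text{(interior nonlinear)}+\nu\!\int_\Omega\langle\q_r,\curl^r\Delta u\rangle\dx .
\end{equation*}

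For the interior terms I would proceed as follows. In the forcing I use $\Delta u=-\curl^2u$ and integrate by parts once, obtaining $-\nu\int_\Omega\langle\curl\q_r,\curl^{r+1}u\rangle\dx$ plus a boundary contribution; since the hypothesis $u\in H^{r+1}$ places $\curl^{r+1}u$ exactly in $L^2$, the interior piece is absorbed as $\e\nu\|\na\q_r\|_{L^2(\Omega)}^2+\e^{-1}\nu\|u\|_{H^{r+1}(\Omega)}^2$, an $O(\nu)$ source. For the nonlinearity I would isolate the top-order pieces $u\times\curl\q_r$ and $w\times\curl\q_r$ and invoke the identity $a\times\curl b=\na\langle a,b\rangle-(a\cdot\na)b-(b\cdot\na)a-b\times\curl a$; after pairing with $\q_r$ and using $\na\cdot u=\na\cdot w=0$, the genuine transport parts are energy-conserving up to boundary terms, and what remains is controlled by $\big(\|u\|_{W^{1,\infty}}+\|w\|_{W^{1,\infty}}\big)\|\q_r\|_{L^2(\Omega)}^2$. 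Here the hypothesis $r>\frac52$ enters decisively through $H^r\emb W^{1,\infty}$: it lets me bound $\|w\|_{W^{1,\infty}}\lesssim\|w\|_{H^r(\Omega)}$ \emph{without} spending the viscous dissipation, so the self-interaction contributes the benign term $\|w\|_{H^r(\Omega)}^3$ rather than a dangerous $\nu^{-1}\|w\|_{H^r(\Omega)}^4$. Throughout, Theorem \ref{theorem: div-curl estimate} is used to pass between $\sum_{l\le r}\|\q_l\|_{L^2(\Omega)}^2$ and $\|w\|_{H^r(\Omega)}^2$, and between $\|\na\q_r\|_{L^2(\Omega)}^2$ and $\|w\|_{H^{r+1}(\Omega)}^2$ — with the proviso, addressed next, that this identity must be corrected for the Navier-condition defect of $w$.

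The hard part, and the genuinely new ingredient beyond Theorem \ref{theorem: higher order energy estimate}, is the collection of boundary integrals over $\po$, together with the fact that both the div-curl estimate and the order-reduction identity \eqref{identity for boundary, order reduction} were built on the Navier condition that $w$ fails to satisfy. Every reduction step previously traded a normal derivative for tangential and zeroth-order data via $\na_3u^{\nu,\beta}=2\two_{\alpha\beta}u^{\nu,\alpha}-\zeta^{-1}u^{\nu,\beta}$; for $w$ this becomes
\begin{equation*}
\na_3 w^\beta=2\two_{\alpha\beta}w^\alpha-\frac1\zeta w^\beta+g^\beta,\qquad g^\beta:=2\two_{\alpha\beta}u^\alpha-\frac1\zeta u^\beta-\na_3u^\beta,
\end{equation*}
where the defect $g=g(u)$ is a \emph{fixed} trace determined by the Euler solution, bounded in $H^{r-1/2}(\po)$ by $\|u\|_{H^{r+1}(\Omega)}$. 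I would carry $g$ through every boundary estimate of \S 3--\S 4; the $w$-homogeneous parts reproduce the earlier bounds verbatim, while each defect-generated term is linear in $g$ and is handled by the Sobolev trace and interpolation inequalities. The delicate point is that the top-order boundary contributions cannot be reduced all the way down to the regularity actually available, and the sharpest bound one can extract couples a high-order norm of $w$ to $\|u\|_{H^{r+1}(\Omega)}$ with an irreducible loss. Balancing this loss against the dissipation $\nu\|w\|_{H^{r+1}(\Omega)}^2$ by Young's inequality with an optimized weight — equivalently, interpolating between the dissipation-controlled $H^{r+1}$-bound and the lower-order energy — is what degrades the naive rate $\nu^{1/2}$ to $\nu^{1/3}$; pinning down this exponent is the most delicate book-keeping in the argument, and I expect it to be the main obstacle.

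Collecting everything, I expect a differential inequality of the form
\begin{equation*}
\mathcal{E}'(t)\le C\big(\|u\|_{H^{r+1}(\Omega)}\big)\,\mathcal{E}(t)+C\,\mathcal{E}(t)^{3/2}+C\,\nu^{2/3},\qquad \mathcal{E}(t):=\|w(t,\cdot)\|_{H^r(\Omega)}^2,
\end{equation*}
with $\mathcal{E}(0)=0$ because $u^\nu$ and $u$ share the datum $u_0$. The argument is then closed by a continuation/bootstrap: on any maximal subinterval where $\mathcal{E}\le1$ one has $\mathcal{E}^{3/2}\le\mathcal{E}$, and integrating via Grönwall's inequality (cf. the comparison problem \eqref{ODE}--\eqref{blowup time T-star}) yields $\sup_{[0,T_\star)}\mathcal{E}\lesssim\nu^{2/3}$ once $0<\nu\le\nu_\star$, where $\nu_\star$ depends only on $T_\star$ and $\int_0^{T_\star}\|u(t,\cdot)\|_{H^{r+1}(\Omega)}^2\dt$. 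This simultaneously propagates the Navier-Stokes solution across the whole interval $[0,T_\star)$ in $H^r$ (the existence claim for $r>2$, where for the existence step one may instead spend the dissipation through $H^{r+1}\emb W^{1,\infty}$) and closes the bootstrap, so that taking square roots gives $\sup_{0\le t<T_\star}\|u^\nu-u\|_{H^r(\Omega)}\le C\nu^{1/3}$ and hence the asserted uniform convergence as $\nu\to0^{+}$. The interior and nonlinear estimates are refinements of the machinery already developed in \S 3--\S 4; the boundary analysis of the preceding paragraph is the crux.
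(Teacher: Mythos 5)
Your overall framework coincides with the paper's: an $L^2$ energy estimate for $\curl^r(u^\nu-u)$, the div--curl equivalence of Theorem~\ref{theorem: div-curl estimate}, a commutator/Leibniz treatment of the nonlinearity in which $H^r\emb W^{1,\infty}$ for $r>\frac52$ yields a linear-coefficient term plus a benign cubic term, and an ODE argument with a smallness threshold for $\nu$. You also correctly notice a genuine subtlety: $w=u^\nu-u$ satisfies only the kinematic condition, since the Euler solution need not satisfy the Navier condition. (The paper handles this point by asserting the reduction identity \eqref{claim: boundary condition for curls of v} directly for $\vv$, its base case invoking ``the Navier boundary condition'' for $\vv$ itself; your defect function $g$ is the more careful bookkeeping of exactly this point.) The gap is that your proposal stops at the step on which everything depends: you never derive the boundary estimates, you postulate that the defect terms leave an ``irreducible'' $O(\nu^{2/3})$ source, and you yourself flag this as ``the main obstacle.'' Moreover, the postulate cannot be substantiated in the way you describe. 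Your bound $g\in H^{r-1/2}(\po)$ holds only for the first-order defect; once the reduction is iterated up to top order (as needed for the term $\nu\int_\po(\curl^{r+1}w)^k(\curl^r u)^i\n^j\ddh$, the analogue of $\V^{4,1}_{\rm a}$), the defect contains $\pi\circ\curl^{r+1}u$ restricted to $\po$, i.e.\ the trace of an $(r+1)$-th derivative of the Euler solution, which is \emph{not} controlled by $\|u\|_{H^{r+1}(\Omega)}$ (no trace theorem applies at that order). Worse, these worst defect terms involve only $u$ and not $w$, so your proposed remedy --- balancing the loss against the dissipation $\nu\|w\|^2_{H^{r+1}(\Omega)}$ --- cannot apply: the dissipation controls $w$, while the regularity shortfall sits on the fixed field $u$.

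Second, the mechanism you conjecture for the exponent $\frac13$ is not the one the theorem rests on, and it is inconsistent with your own closing argument. In the paper every source, interior and boundary, is absorbed into an $O(\nu)$ term (Eqs.\ \eqref{V1 final}--\eqref{V2 final}, \eqref{V3_final}, \eqref{V4 final}--\eqref{V_5 final}), giving \eqref{before gronwall}; the exponential weight $\alpha(t)$ then removes the linear term, so that $\Phi'\le \nu F(t)+G\,\Phi^{3/2}$, and the rate $\nu^{1/3}$ arises \emph{purely} from the ODE comparison step: dividing by $(1+\beta\Phi)^{3/2}$, integrating, and optimizing $\beta$ gives $\Phi\lesssim\nu^{2/3}$ whenever $\nu\le\nu_\star=\big(8G^2T_\star^2\int_0^{T_\star}F\,\dt\big)^{-1}$, which simultaneously furnishes the uniform lifespan. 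By contrast, your bootstrap (on the set $\{\mathcal{E}\le1\}$, where $\mathcal{E}^{3/2}\le\mathcal{E}$, then Gronwall) applied to the inequality one actually obtains --- linear plus cubic plus $O(\nu)$ source --- would give $\mathcal{E}\lesssim\nu$, i.e.\ the rate $\nu^{1/2}$; you land on $\nu^{1/3}$ only because the $\nu^{2/3}$ source was inserted by hand. So the proposal is a programme whose decisive estimate is missing, and the rate mechanism it conjectures is neither proved nor the one actually at work in the theorem.
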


\begin{proof}
We divide the arguments into seven steps.

{\bf 1.} First of all,  define
\begin{equation}
\vv := u^\nu - u, \qquad P^\nu = p^\nu-p.
\end{equation}
Our goal is to show that $\vv$ converges to zero in the $H^{r}$--norm, uniformly in $\nu$.
First, subtracting the Euler equations \eqref{Euler} from the Navier-Stokes equations \eqref{NS equation}
yields
\begin{equation}\label{vanishing viscosity, main eqn}
\begin{cases}
\p_t \vv + \big\{(u+\vv)\cdot \na\big\}\vv - \nu \Delta \vv + (\vv \cdot \na) u -\nu \Delta u + \na P^\nu =0,\\
\na \cdot \vv = 0.
\end{cases}
\end{equation}

Next, for
\begin{equation}
\vr:= \curl^r \, \vv \in H^1(\Omega; \R^3),
\end{equation}
noticing that $\curl(\na P^\nu) = 0$, we have
\begin{equation}\label{vr, vanishing viscosity}
\p_t \vr + \curl^r \,\big\{\big((u+\vv)\cdot\na\big)\vv\big\} + \curl^r \,\big\{(\vv\cdot \na)u\big\} - \nu \Delta \vr - \nu \Delta \curl^r\, u =0.
\end{equation}

In view of Theorem \ref{theorem: div-curl estimate}, in order to bound $\vv$ in $H^r$,
it suffices to bound the $L^2$--norm of $\vr$.
To do so, we multiply $\vr$ to Eq. \eqref{vr, vanishing viscosity} and integrate over $\Omega$ to obtain
\begin{align}\label{vanishing viscosity, by parts}
0&= \frac{1}{2} \frac{d}{dt}\int_\Omega |\vr|^2\dx + \int_\Omega \vr \cdot \curl^r\,\big\{\big((u+\vv)\cdot\na\big)\vv\big\} \dx
    +\int_\Omega \vr \cdot \curl^r \,\big\{(\vv\cdot \na)u\big\} \dx\nonumber\\
&\quad  -\nu \int_\Omega \vr \cdot \Delta \vr \dx - \nu \int_\Omega \vr \cdot \Delta\curl^r\, u \dx.
\end{align}

Using integration by parts and the divergence theorem, the fourth term on the right-hand side
of Eq. \eqref{vanishing viscosity, by parts} becomes
\begin{align*}
 -\nu \int_\Omega \vr \cdot \Delta \vr \dx &= -\nu\int_\po (\vr\cdot\na \vr)\cdot\n\ddh + \nu \int_\Omega|\na\vr|^2\dx \\
 &=-\nu \int_\po \two(\vr,\vr)\ddh + \nu \int_\Omega|\na\vr|^2\dx,
\end{align*}
since $\two=-\na\n$.
Meanwhile, the fifth term on the right-hand side of Eq. \eqref{vanishing viscosity, by parts} becomes
\begin{align*}
- \nu \int_\Omega \vr \cdot \Delta\curl^r\, u \dx &= -\nu \int_\po \langle\vr\cdot \na (\curl^r\, u),\n\rangle\ddh + \nu \int_\Omega(\na\vr): (\na\curl^r\,u)\dx,
\end{align*}
where $A:B = \sum_{i,j=1}^3 A_{ij}B_{ji}$ for  $3\times 3$ matrices $A$ and $B$.
Therefore, the following identity is derived from Eq. \eqref{vanishing viscosity, by parts}:
\begin{align}\label{vanishing viscosity, energy estimate}
&\frac{1}{2}\frac{d}{dt}\int_\Omega|\vr|^2 \dx + \nu \int_\Omega |\na\vr|^2\dx \nonumber\\
&= \nu\int_\po \two(\vr,\vr) \ddh - \nu \int_\Omega(\na\vr): (\na\curl^r\,u)\dx -\int_\Omega \big\langle \vr, \,\curl^r\,\big\{\big((u+\vv)\cdot\na\big)\vv\big\} \big\rangle\dx\nonumber\\
&\quad
+\nu \int_{\partial\Omega} \langle \mathbf{V}^\nu_r, \partial_{\mathbf{n}}(\mathbf{curl}^r u) \rangle\,{\rm d}\mathcal{H}^2
 - \int_\Omega \big\langle\vr, \,\curl^r\big\{(\vv\cdot\na)u\big\} \big\rangle\dx\nonumber\\
&= \V^1 + \V^2+\V^3+\V^4+\V^5.
\end{align}
In the following steps, we estimate each of the five terms $\V^1$--$\V^5$.

\smallskip
{\bf 2.} The bound for $\V^1$ is straightforward: As the second fundamental form is bounded in $C^{r-1}(\po)$,
we have
\begin{equation}\label{V1 final}
|\V^1| \lesssim \nu\|\vr\|^2_{L^2(\po)} \simeq \nu \|\vv\|^2_{H^r(\po)} \lesssim {\e}\nu \|\curl^{r+1}\,\vv\|^2_{L^2(\Omega)} + \frac{\nu}{\e} \|\vv\|^2_{H^{r}(\Omega)},
\end{equation}
thanks to the Cauchy-Schwarz inequality, Theorem \ref{theorem: div-curl estimate}, the Young inequality,
and the Sobolev trace inequality.
In addition, the bound for $\V^2$ is also immediate:
\begin{align}\label{V2 final}
|\V^2| &\lesssim \nu \|\na \vr\|_{L^2(\Omega)} \|\na\curl^r\,u\|_{L^2(\Omega)} \nonumber\\
&\simeq \nu \|\curl^{r+1}\,\vv\|^2_{L^2(\Omega)} \|u\|_{H^{r+1}(\Omega)}\nonumber\\
&\lesssim \e \nu \|\curl^{r+1}\,\vv\|^2_{L^2(\Omega)}+ \frac{\nu}{\e} \|u\|^2_{H^{r+1}(\Omega)},
\end{align}
by using the Cauchy-Schwarz and Young inequalities again.

{\bf 3.} Next, we  bound  $\V^3:=-\int_\Omega \langle \vr, \,\curl^r\,\{((u+\vv)\cdot\na)\vv\} \rangle\dx$
by applying a commutator estimate in the spirit of Kato-Ponce \cite{kato-ponce} to the nonlinear convective terms.
For this purpose, we introduce the following abbreviation for the operator:
\begin{equation}
\mathcal{T}=\mathcal{T}(u,\vv) := (u+\vv)\cdot \na.
\end{equation}
Then $\V^3$ can be written as
\begin{align}\label{V3}
\V^3 &= -\int_\Omega \langle \vr, \curl^r\circ \mathcal{T} (\vv)\rangle \dx \nonumber\\
&= -\int_\Omega \langle\vr, \mathcal{T} (\vr) \rangle \dx - \int_\Omega \langle \vr, [\curl^r, \mathcal{T}] \vv \rangle \dx,
\end{align}
where
\begin{equation*}
[\curl^r, \mathcal{T}] = \curl^r\circ \mathcal{T} - \mathcal{T}\circ \curl^r
\end{equation*}
is the commutator of the differential operators.
Then the first term on the right-hand side of Eq. \eqref{V3} equals
\begin{equation}
-\int_\Omega \langle\vr, \mathcal{T} (\vr) \rangle \dx = - \frac{1}{2}\int_\po |\vr|^2 (u + \vv) \cdot \n \ddh = 0,
\end{equation}
thanks to the divergence theorem, the incompressibility of $u$ and $\vv$, and the kinematic boundary conditions for $u$ and $\vv$.

For the second term on the right-hand side of Eq. \eqref{V3}, since $\mathcal{T}$ is a first-order differential operator,
$[\curl^r, \mathcal{T}]$ is of order less than or equal $r$, with the coefficients involving the derivatives of $u$ and $\vv$.
More precisely, we have
\begin{equation}
\big| [\curl^r, \mathcal{T}] \vv \big| \lesssim \sum_{l=1}^r |\na^{[l]} u + \na^{[l]} \vv |\,|\na^{[r+1-l]}\vv|,
\end{equation}
by directly applying the Leibniz rule, where  the schematic symbol $\na^{[l]}$ denotes, as before,
the derivatives up to the $l$-th order.
Next, utilizing the interpolation inequalities, we obtain
\begin{align}\label{xxxx}
|\V^3| &= \Big|\int_\Omega \langle \vr, [\curl^r, \mathcal{T}] \vv \rangle \dx\Big| \nonumber\\
&\lesssim \sum_{l=1}^r \int_\Omega  \left\{|\na^{[r]}\vv| |\na^{[l]}u||\na^{[r+1-l]}\vv| + |\na^{[r]}\vv||\na^{[l]}\vv||\na^{[r+1-l]}\vv| \right\}\dx \nonumber\\
&\lesssim \|\vv\|^2_{H^r(\Omega)} \|\na u\|_{L^\infty(\Omega)} + \|\vv\|_{H^r(\Omega)}\|u\|_{H^r(\Omega)} \|\na \vv\|_{L^\infty} + \|\vv\|_{H^r(\Omega)}^2 \|\na \vv\|_{L^\infty}.
\end{align}
Therefore, in view of the Sobolev embedding $H^{r+1}\emb W^{1,\infty}$ in 3-D for $r>\frac{5}{2}$, we have
\begin{equation}\label{V3_final}
|\V^3| \lesssim \big( 1+\|u\|_{H^{r+1}(\Omega)} \big) \|\vv\|_{H^r(\Omega)}^2 +\|\vv\|^3_{H^r}.
\end{equation}
In particular, we observe that the above upper bound for $\V^3$ involves only the derivatives up to the $r$-th order of $\vv$.

{\bf 4.} Next, to treat the fourth term
$$
\V^4 := \nu \int_{\partial\Omega}\langle \mathbf{V}^\nu_r, \partial_{\mathbf{n}} (\mathbf{curl}^r u)\rangle\,{\rm d}\mathcal{H}^2,
$$
it is crucial to take into account the Navier boundary condition.
Indeed, using Einstein's summation convention and integrating by parts on $\po$, we first obtain
\begin{align}\label{V4, decomposition}
\V^4 &= \nu \int_\po (\vr)^i \na_j (\curl^r\, u)^i \n^j \ddh \nonumber\\
&=-\nu \int_\po \na_j(\vr)^i (\curl^r\, u)^i \n^j \ddh + \nu \int_\po H \langle \vr, \curl^r\,u \rangle \ddh =: \V^{4,1} + \V^{4,2},
\end{align}
where $H=-\na_j \n^j$ is the mean curvature of boundary $\po$.
Here, by the boundedness of the second fundamental form, the second term is bounded by
\begin{align}\label{V4,2}
|\V^{4,2}| &\lesssim \nu \big(\|\vv\|^2_{H^r(\po)} + \|u\|^2_{H^r(\po)}\big) \nonumber\\
&\lesssim \e \nu \big(\|\vv\|^2_{H^{r+1}(\Omega)} + \|u\|^2_{H^{r+1}(\Omega)}\big)
  + \frac{\nu}{\e}\big(\|\vv\|^2_{H^r(\Omega)} + \|u\|^2_{H^r(\Omega)}\big).
\end{align}

For $\V^{4,1}$, we add and subtract $\na_i (\vr)^j$ from the integrand to obtain
\begin{align}
\V^{4,1} &= -\nu \int_\po \Big\{ \na_j (\vr)^i-\na_i(\vr)^j\Big\} (\curl^r\, u)^i \n^j \ddh \nonumber\\
&\quad  + \nu \int_\po \na_i(\vr)^j (\curl^r\, u)^i \n^j \ddh =: \V^{4,1}_{\rm a} + \V^{4,1}_{\rm s}.
\end{align}
By the incompressibility condition, the integral vanishes for $i=j$ so that it suffices to consider $i \neq j$ in the summation.
In this case, denoting by $k$ the index in $\{1,2,3\}$ different from $i, j$, we have
\begin{equation}\label{V4,1_a}
\V^{4,1}_{\rm a} = \sigma \nu \int_\po (\curl^{r+1}\, \vv)^k (\curl^r \, u)^i \n^j \ddh,
\end{equation}
where $\sigma\in \{1, -1\}$ is a sign.

To proceed, we first establish the following {\em claim}: For the tangential projection
$\pi: T\R^3 \rightarrow T(\po)$ as before,
the iterated curls of $\vv$ satisfy the following non-Navier slip-type boundary condition:
\begin{equation}\label{claim: boundary condition for curls of v}
\pi \circ \curl^{r+1} \, \vv = -\frac{1}{\zeta} \rot\circ \pi (\curl^r \vv) + 2 \rot \circ \pi \circ \curl^r \circ \shape \circ \pi (\vv) \qquad \text{ on } \po.
\end{equation}

The proof of the above {\em claim} goes by induction on $r$, similar to the arguments in Step $4$ in the proof
of Theorem \ref{theorem: higher order energy estimate}.
Indeed, for $r=0$, it reduces to the Navier boundary condition \eqref{boundary conditions}; thus, we assume the result first for $r$ and then prove it for $r+1$.
Taking a moving frame $\{\na_1, \na_2, \na_3\}$ adapted to surface $\po$ such that $\na_1,\na_2 \in T(\po)$ and $\na_3 = \n$,
by the induction hypothesis, we have
\begin{align*}
\pi (\curl^{r+1} \, \vv) &= \pi\circ \curl \big( -\frac{1}{\zeta} \rot \circ \pi (\curl^{r-1}\,\vv) + 2 \rot \circ \pi \circ \curl^{r-1}\circ\shape \circ \pi (\vv)\big)\\
&=\begin{bmatrix}
-\na_3 \Big( -\frac{1}{\zeta} \rot \circ \pi (\curl^{r-1}\,\vv) + 2 \rot \circ \pi \circ \curl^{r-1}\circ\shape \circ \pi (\vv)\Big)^2\\
\na_3\Big( -\frac{1}{\zeta} \rot \circ \pi (\curl^{r-1}\,\vv) + 2 \rot \circ \pi \circ \curl^{r-1}\circ\shape \circ \pi (\vv)\Big)^1
\end{bmatrix}.
\end{align*}
Since $\rot ((V^1, V^2)^\top) = (-V^2, V^1)^\top$ for any 2-D vector $V$,
the above equalities yield
\begin{align}
\pi(\curl^{r+1} \, \vv) &= \begin{bmatrix}
-\na_3 \Big(-\frac{1}{\zeta} (\curl^{r-1}\, \vv)^1 + 2 (\curl^{r-1}\circ \shape \circ\pi(\vv))^1\Big) \\
-\na_3 \Big( -\frac{1}{\zeta} (\curl^{r-1}\,\vv)^2 + 2 (\curl^{r-1}\circ \shape \circ\pi(\vv))^2 \Big)
\end{bmatrix}\nonumber\\
&= -\frac{1}{\zeta} \Big\{-\na_3 (\curl^{r-1}\, \vv)\Big\} -2\na_3 \Big\{\curl^{r-1}\circ \shape \circ\pi(\vv)\Big\}\nonumber\\
&= -\frac{1}{\zeta} \Big\{ \rot \circ \pi \circ \curl \, (\curl^{r-1}\, \vv)  \Big\} + 2 \Big\{ \rot\circ \pi\circ \curl\, (\curl^{r-1}\circ \shape \circ\pi(\vv)) \Big\} \nonumber\\
&=  -\frac{1}{\zeta} \Big\{\rot \circ \pi \circ \curl^r\, \vv\Big\}  + 2\Big\{\rot \circ \pi \circ \curl^r \circ \shape \circ \pi (\vv)\Big\}.
\end{align}
Thus,  {\em claim} \eqref{claim: boundary condition for curls of v} is proved.

The above {\em claim} shows that the iterated curls of $\vv$ can be expressed on boundary $\po$ by the derivatives of $\vv$ up to the $r$-th order,
together with the bounded operators $\rot$, $\shape$, and $\pi$.
Thus, Eq. \eqref{V4,1_a} becomes
\begin{align*}
\V^{4,1}_{\rm a} &= \sigma \nu \int_\po \Big\{ -\frac{1}{\zeta} [\rot \circ \pi(\curl^r\,\vv)]^k (\curl^r\, u)^i \n^j  \Big\} \ddh\\
&\quad +2\sigma\nu \int_\po \Big\{[\rot\circ\pi\circ\curl^r\circ\shape\circ\pi(\vv)]^k(\curl^r \, u)^i \n^j\Big\} \ddh,
\end{align*}
where the non-zero contributions come only from the tangential components of $\curl^{r+1} \vv$, and $\n^j=0$ unless $j=3$ in the moving frame $\{\na_1, \na_2,\na_3\}$,
which forces $k \in \{1,2\}$.
We then obtain the following estimate:
\begin{align}\label{V4,1_a final}
|\V^{4,1}_{\rm a}| &\lesssim \nu \Big(\|\vv\|_{H^r(\po)}^2 + \|u\|^2_{H^r(\po)}\Big)\nonumber\\
&\lesssim \nu \Big( \e\|\curl^{r+1}\,\vv\|^2_{H^{r+1}(\Omega)} + \|\vv\|^2_{H^r(\Omega)} + \e \|u\|^2_{H^{r+1}(\Omega)} + \|u\|^2_{H^r(\Omega)} \Big).
\end{align}

On the other hand, for $\V^{4,1}_{\rm s}$, we integrate by part once more to obtain
\begin{equation}
\V^{4,1}_{\rm s} = -\nu \int_\po (\vr)^j (\curl^r \, u)^i \na_i\n^j \ddh = \nu \int_\po \two(\vr, \curl^r\, u)\ddh.
\end{equation}
This is because ${\rm div}\circ \curl^r\, u = \curl^r\circ {\rm div}\, u =0$, and $\two = - \na \n$
by the definition of the second fundamental form.
By assumption, $\|\two\|_{L^\infty(\po)} \leq C$ so that
\begin{align}\label{V4,1_s}
|\V^{4,1}_{\rm s}| &\lesssim \nu \|\vr\|_{L^2(\po)}\|\curl^r\,u\|_{L^2(\po)}\nonumber\\
&\lesssim \e\nu \Big(\|\curl^{r+1}\|^2_{L^2(\Omega)} + \|u\|^2_{H^{r+1}(\Omega)}\Big) + \frac{\nu}{\e} \Big(\|\vv\|^2_{H^r(\Omega)} + \|u\|^2_{H^r(\Omega)}\Big).
\end{align}
Thus, putting together Eqs. \eqref{V4, decomposition}, \eqref{V4,1_a final}, \eqref{V4,1_s}, and \eqref{V4,2}, we conclude
\begin{equation}\label{V4 final}
|\V^4| \lesssim \e\nu \Big(\|\curl^{r+1}\, \vv\|^2_{L^2(\Omega)} + \|u\|^2_{H^{r+1}(\Omega)}\Big) + \frac{\nu}{\e} \Big(\|\vv\|^2_{H^r(\Omega)} + \|u\|^2_{H^r(\Omega)}\Big).
\end{equation}

{\bf 5.} The estimate for $\V^5:=-\int_\Omega \big\langle \curl^r\, \vv , \curl^r \,(\vv \cdot \na u)\big\rangle \dx$
proceeds by the Leibniz rule:
\begin{align*}
|\V^5| &\lesssim \int_\Omega |\vr| \times \Big(\sum_{l=0}^r |\na^{[l]}\vv| |\na^{[r-l+1]}u |\Big) \dx \\
&\lesssim \int_\Omega |\na^{[r]}\vv| \Big(|\na^r\vv||\na u| + |\vv| |\na^{r+1} u| \Big) \dx \\
&\lesssim \|\vv\|^2_{H^r(\Omega)} \|\na u\|_{L^\infty(\Omega)} + \|\vv\|_{H^r(\Omega)} \|\vv\|_{L^\infty(\Omega)} \|u\|_{H^{r+1}(\Omega)} \\
&\lesssim \|\vv\|^2_{H^r(\Omega)} \| u\|_{H^3(\Omega)} + \|\vv\|_{H^r(\Omega)} \|\vv\|_{H^2(\Omega)} \|u\|_{H^{r+1}(\Omega)},
\end{align*}
where the last two lines hold by the interpolation inequality and the Sobolev embedding $H^2(\R^3) \emb L^\infty(\R^3)$.
Thus, we have
\begin{equation}\label{V_5 final}
|\V^5| \lesssim \|\vv\|^2_{H^r(\Omega)} \|u\|_{H^{r+1}(\Omega)},
\end{equation}
whenever $r \geq 2$.

{\bf 6.} Now, putting all the estimates for $\V^1$ -- $\V^5$ in Eqs. \eqref{V1 final}--\eqref{V2 final}, \eqref{V3_final}, and \eqref{V4 final}--\eqref{V_5 final} together,
we obtain
\begin{align}
&\frac{1}{2}\frac{d}{dt} \int_\Omega |\vr|^2 \dx + \nu \int_\Omega |\na \vr|^2 \dx \nonumber\\
&\lesssim \e\nu \|\curl^{r+1} \, \vv\|^2_{L^2(\Omega)} + \big(\e \nu + \frac{\nu}{\e} + \|u\|_{H^{r+1}(\Omega)}\big) \|\vv\|^2_{H^r(\Omega)}
  +\|\vv\|^3_{H^r(\Omega)} + \big( \e\nu+\frac{\nu}{\e}\big) \|u\|^2_{H^{r+1}(\Omega)}.
\end{align}
Thus, we can choose $\e>0$ to be small so that
\begin{equation}\label{before gronwall}
\frac{d}{dt}\int_\Omega |\vr|^2\dx + \nu \|\curl^{r+1}\vv\|^2_{L^2(\Omega)}
\leq C_1 (1+\|u\|_{H^{r+1}(\Omega)})\|\vv\|^2_{H^r(\Omega)} +\|\vv\|^3_{H^r(\Omega)} + C_2 \nu\|u\|^2_{H^{r+1}(\Omega)},
\end{equation}
where constants $C_1$ and $C_2$ depend on $\nu_0$ and $\e$.
Here, it is crucial to choose $\e$ that depends only on $\|\two\|_{C^{r}(\po)}$, {\em independent of $\nu$}.

As a consequence, the energy function  $E(t):=\|\vv(t,\cdot)\|_{H^r(\Omega)}^2$
satisfies the differential inequality:
\begin{equation}\label{ODineq, without additional assumption}
E'(t) \leq C_3 E(t)^{3/2}
 + C_4\nu,
\end{equation}
where $C_3=C_3(C_1, \|u\|_{H^{r+1}(\Omega)})$ and $C_4=C_2\|u\|_{H^{r+1}(\Omega)}^2$ by interpolation.

{\bf 7.}
Finally, in order to show that the lifespan of strong solutions of the Navier-Stokes equations
are no less than $T_\star$ uniformly in $\nu \in (0, \nu_\star)$
and to derive the inviscid limit, for any fixed $T\in (0,T_\star)$,
we define
\begin{equation}
\begin{cases}
\alpha(t):= \exp\big\{-C_1 t-C_1\int_0^t \|u(s,\cdot)\|_{H^{r+1}(\Omega)}\,\dd s\big\},\\
F(t):=C_2\alpha(t) \|u(t,\cdot)\|^2_{H^{r+1}(\Omega)},\\
G:= C_1 \alpha(T)^{-1} = {\rm const.},\\
\Phi(t):=\alpha(t) \|\vv(t,\cdot)\|_{H^{r}(\Omega)}^2.
\end{cases}
\end{equation}
Then $\Phi$ satisfies the ordinary differential inequality:
\begin{equation}\label{ODineq for Phi}
\Phi'(t) \leq \nu F(t) + G\Phi(t)^{3/2} \qquad \text{ on } [0,T].
\end{equation}
For some parameter $\beta >0$ to be chosen later, we divide Eq. \eqref{ODineq for Phi} by $\big(1+\beta\Phi(t)\big)^{3/2}$ to obtain
\begin{equation*}
\frac{\Phi'(t)}{(1+\beta\Phi(t))^{3/2}} \leq \nu F(t) + \frac{G}{\beta^{3/2}}.
\end{equation*}
Thus, by integrating from $0$ to $t \in [0,T]$, we obtain the estimate:
\begin{equation*}
\frac{2}{\beta} \Big(1-\frac{1}{\sqrt{1+\beta\Phi(t)}}\Big) \leq \nu \int_0^T F(t)\dd t + \frac{GT}{\beta^{3/2}},
\end{equation*}
that is,
\begin{equation}\label{eqn: rhs larger than a half}
\frac{1}{\sqrt{1+\beta\Phi(t)}} \geq 1 - \frac{\beta \nu}{2}\int_0^T F(t)\dd t - \frac{GT}{2\sqrt{\beta}}.
\end{equation}

Notice that the right-hand side of Eq. \eqref{eqn: rhs larger than a half} is maximized when the two negative terms
are equal:
\begin{equation}
\beta = \Big(\frac{GT}{\nu\int_0^T F(t)\dd t}\Big)^{2/3}.
\end{equation}
Then the right-hand side equals
\begin{equation*}
1- (GT)^{\frac{2}{3}}\Big(\nu \int_0^TF(t)\,{\rm d}t\Big)^{\frac{1}{3}}.
\end{equation*}
It is bigger than or equal to $\frac{1}{2}$ if and only if
\begin{equation*}
\nu \leq \Big(8(GT)^2 \int_0^TF(t)\,{\rm d}t\Big)^{-1}.
\end{equation*}
On the other hand, by Eq. \eqref{eqn: rhs larger than a half}, $\frac{1}{\sqrt{1+\beta\Phi(t)}} \geq \frac{1}{2}$
for all $0 \leq t \leq T$; that is, $\Phi(t) \leq \frac{3}{\beta}$ on $[0,T]$.

In summary, we have established the following: If we set
\begin{equation*}
\nu_\star = \Big(8 G^2T_\star^2 \int_0^{T_\star} F(t) \, {\rm d}t\Big)^{-1}.
\end{equation*}
then, whenever $0< \nu \leq \nu_\star$,
\begin{equation}
\Phi(t) \leq 3 \Big(\frac{\int_0^T F(t)\dd t}{GT}\Big)^{2/3} \nu^{2/3},
\end{equation}
which is equivalent to
\begin{align}
&\sup_{t \in [0,T]}\|u^\nu (t,\cdot)-u(t,\cdot)\|^2_{H^r(\Omega)} \nonumber\\
&\leq 3\exp\Big\{C_1 T + C_1\int_0^T \|u(s,\cdot)\|_{H^{r+1}(\Omega)}\,{\rm d}s \Big\}
\,\Big( \frac{\int_0^T F(t)\,{\rm d}t}{GT} \Big)^{\frac{2}{3}}\nu^{\frac{2}{3}}.
\end{align}
This holds for any $t\in [0,T]$, where $T$ is an arbitrary number in $(0,T_\star)$.
Therefore, the Navier-Stokes solution $u^\nu$ does not blow up on $[0,T]$ in the $H^r$--norm in space,
provided that $0 < \nu\leq\nu_\star$. This completes the proof.
\end{proof}

To conclude this section, we now give the following three remarks.

\begin{remark}
A key point of Theorem {\rm \ref{theorem: inviscid limit}} is that the strong solutions $u^\nu$
to the Navier-Stokes equations do not blow up before $T_\star$ in $H^r$,
where $T_\star$ is the lifespan of the corresponding Euler equations in $H^{r+1}$ for $r>\frac{5}{2}$.
The arguments {\rm (}Step {\rm 7} of the proof{\rm )} are adapted from  \S\,{\rm 1} in Constantin \cite{c1},
in which the case of periodic boundary conditions are treated.
This does not directly follow from our proof of Theorem {\rm \ref{theorem: higher order energy estimate}}.
In fact,  constant $M$ is proportional to $\nu^{-1}$ in Eq. \eqref{ODineq:energy},
so that the lifespan for the Navier-Stokes equations {\rm (}in $H^{r+1}$ in space{\rm )} is proportional
to viscosity $\nu$, which goes to zero in the vanishing viscosity limit.
\end{remark}

\begin{remark} In Theorem {\rm \ref{theorem: inviscid limit}}, the rate of convergence in the inviscid limit
is $\mathcal{O}(\nu^{1/3})$. It can be improved to $\mathcal{O}(\sqrt{\nu})$, provided that $\{\na u^\nu - \na u\}$ is uniformly
bounded in space-time.
Moreover, in this case, the $H^{r+1}$--norm of $u^\nu$ is also close to that of $u$ in the average in time.

\begin{proposition}\label{proposition: inviscid limit under additional assumption}
Let $u^\nu, u, \nu^0, T_\star$, and $r$ be as in Theorem {\rm \ref{theorem: inviscid limit}}.
In addition, suppose that $\{\na u^\nu - \na u\}$ is uniformly bounded in $L^\infty([0,T_\star) \times \Omega; \R^3)$.
Then there exists a constant $C$, depending only on  $T_\star$, $\|u\|_{L^2([0,T_\star); H^{r+1}(\Omega))}$, and $\|\two\|_{C^r(\po)}$,
such that
\begin{equation}
\sup_{0 \leq t <T_\star} \|u^\nu(t,\cdot) - u(t,\cdot)\|_{H^{r}(\Omega)} \leq C\sqrt{\nu}.
\end{equation}
In particular, as $\nu \rightarrow 0^{+}$, $u^\nu$ converges to $u$ in $H^{r}(\Omega)$ uniformly in time. In addition,
\begin{equation}
\int_0^{T} \|u^\nu(t,\cdot) - u(t,\cdot)\|_{H^{r+1}(\Omega)} \leq C \qquad \text{ for any } T \in [0,T_\star).
\end{equation}
\end{proposition}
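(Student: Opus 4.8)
The plan is to reopen the proof of Theorem~\ref{theorem: inviscid limit} and to observe that the extra boundedness hypothesis removes its \emph{only} superlinear term. The sole source of superlinearity is the cubic self-interaction in the estimate for $\V^3$: recall from Eq.~\eqref{xxxx} that the Kato--Ponce-type commutator bound yields
\begin{equation*}
|\V^3| \lesssim \|\vv\|^2_{H^r(\Omega)}\,\|\na u\|_{L^\infty(\Omega)} + \|\vv\|_{H^r(\Omega)}\,\|u\|_{H^r(\Omega)}\,\|\na\vv\|_{L^\infty(\Omega)} + \|\vv\|^2_{H^r(\Omega)}\,\|\na\vv\|_{L^\infty(\Omega)} .
\end{equation*}
In Theorem~\ref{theorem: inviscid limit} the Sobolev embedding $H^r\emb W^{1,\infty}$ (valid for $r>\frac{5}{2}$) was applied to the \emph{last} term, producing the cubic $\|\vv\|^3_{H^r(\Omega)}$ of Eq.~\eqref{V3_final} and hence the superlinear inequality \eqref{ODineq, without additional assumption}. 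Under the hypothesis $\|\na\vv\|_{L^\infty([0,T_\star)\times\Omega)}\le K$ uniformly in $\nu$, I would instead bound that last term by $K\|\vv\|^2_{H^r(\Omega)}$, which is linear in $E(t):=\|\vv(t,\cdot)\|^2_{H^r(\Omega)}$. It is essential to retain the Sobolev embedding in the \emph{middle} term (giving $\|u\|_{H^r(\Omega)}\,\|\vv\|^2_{H^r(\Omega)}$, again linear in $E$) rather than invoking the $L^\infty$-bound there, since the latter would leave an uncontrolled factor $\sqrt{E}$.

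With this one change, each of $\V^1$--$\V^5$ is now either carried by a factor $\nu$ or linear in $E$ with coefficient controlled by $\|u\|_{H^{r+1}(\Omega)}$ and $K$, so that \eqref{before gronwall} upgrades to the \emph{linear} differential inequality
\begin{equation*}
E'(t) + \nu\,\|\curl^{r+1}\vv\|^2_{L^2(\Omega)} \le C_1\big(1+\|u(t,\cdot)\|_{H^{r+1}(\Omega)}\big)\,E(t) + C_2\,\nu\,\|u(t,\cdot)\|^2_{H^{r+1}(\Omega)} ,
\end{equation*}
where $C_1$ now also depends on $K$. Because $u^\nu$ and $u$ share the initial datum $u_0$, we have $\vv(0,\cdot)=0$ and thus $E(0)=0$. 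A plain application of Grönwall's inequality with integrating factor $\exp\{C_1\int_0^t(1+\|u(s,\cdot)\|_{H^{r+1}(\Omega)})\,\dd s\}$ (the $\beta$-optimization of Step~7 of Theorem~\ref{theorem: inviscid limit} being needed only to handle the now-absent cubic term) then gives $E(t)\lesssim\nu$ uniformly on $[0,T_\star)$, using $u\in L^2([0,T_\star);H^{r+1}(\Omega))$ to bound both the integrating factor and the $\nu$-forcing. Taking square roots yields the asserted $\mathcal{O}(\sqrt{\nu})$ rate, with the constant depending only on $T_\star$, $\|u\|_{L^2([0,T_\star);H^{r+1}(\Omega))}$, $\|\two\|_{C^r(\po)}$, and $K$.

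For the averaged $H^{r+1}$-bound I would integrate the linear differential inequality over $[0,T]$, this time keeping the dissipation term on the left. Using $E(0)=0$, the bound $E(t)\lesssim\nu$ just obtained, and the finiteness of $\int_0^T(1+\|u(t,\cdot)\|_{H^{r+1}(\Omega)})\dt$, the right-hand side integrates to a quantity of order $\nu$, whence
\begin{equation*}
\nu\int_0^T \|\curl^{r+1}\vv\|^2_{L^2(\Omega)}\dt \lesssim \nu ,
\end{equation*}
and the common factor $\nu$ cancels. The div-curl estimate of Theorem~\ref{theorem: div-curl estimate}, applied to the divergence-free field $\vv$, converts $\|\curl^{r+1}\vv\|^2_{L^2(\Omega)}$ together with the lower-order curls (controlled by $\|\vv\|^2_{H^r(\Omega)}\lesssim\nu$) into $\|\vv\|^2_{H^{r+1}(\Omega)}$, so that $\int_0^T\|\vv\|^2_{H^{r+1}(\Omega)}\dt\le C$ uniformly in $\nu$. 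The Cauchy--Schwarz inequality then gives $\int_0^T\|\vv\|_{H^{r+1}(\Omega)}\dt\le\sqrt{T}\,C^{1/2}$, as claimed.

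The heart of the matter, and the step I expect to require the most care, is the \emph{selective} use of the hypothesis: the uniform $L^\infty$-bound must be deployed only on the genuinely cubic self-interaction, while the Sobolev embedding is kept for the mixed $u$--$\vv$ term. This preserves the homogeneity in $E$ that allows the linear Grönwall argument to return a bound proportional to the $\mathcal{O}(\nu)$ forcing rather than an $\mathcal{O}(1)$ constant; any slip here—producing a stray $\sqrt{E}$ or a $\nu$-free constant forcing—would degrade the conclusion back to the $\mathcal{O}(\nu^{1/3})$ rate of Theorem~\ref{theorem: inviscid limit} or worse.
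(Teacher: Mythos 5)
Your proposal is correct and follows essentially the same route as the paper's own proof: the only change to the argument of Theorem \ref{theorem: inviscid limit} is the improved bound $|\V^3|\lesssim (1+\|u\|_{H^r(\Omega)})\|\vv\|^2_{H^r(\Omega)}$ (with the uniform $L^\infty$ hypothesis applied only to the cubic self-interaction term), followed by a linear Gr\"{o}nwall argument using $\vv(0,\cdot)=0$ to get the $\mathcal{O}(\sqrt{\nu})$ rate, and then integration of the dissipation term, cancellation of the factor $\nu$, and Theorem \ref{theorem: div-curl estimate} for the averaged $H^{r+1}$ bound. Your explicit observation that the Sobolev embedding must be retained on the mixed $u$--$\vv$ term (lest a stray $\sqrt{E}$ appear) makes precise a step the paper leaves implicit.
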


\begin{proof}
The proof follows essentially from the arguments in Theorem \ref{theorem: inviscid limit} above,
{\it i.e.} by considering the evolution equation for $v^\nu :=u^\nu - u$.
We only emphasize the differences.

Indeed, starting from Eq. \eqref{vanishing viscosity, energy estimate}, we estimate the terms, $\V^1, \V^2, \V^4$, and $\V^5$,
as in Steps 1--2 and  4-- 5 in the proof of Theorem \ref{theorem: inviscid limit}.
The only difference occurs in Step 3. Recall Eq. \eqref{xxxx} therein:
\begin{equation*}
|\V^3| \lesssim  \|\vv\|^2_{H^r(\Omega)} \|\na u\|_{L^\infty(\Omega)} + \|\vv\|_{H^r(\Omega)}\|u\|_{H^r(\Omega)} \|\na \vv\|_{L^\infty} + \|\vv\|_{H^r(\Omega)}^2 \|\na \vv\|_{L^\infty}.
\end{equation*}
Under the additional assumption, $\|\na \vv\|_{L^\infty} \leq C$ so that
\begin{equation}
|\V^3| \lesssim \big(1+\|u\|_{H^r(\Omega)}\big) \|\vv\|^2_{H^r(\Omega)}.
\end{equation}
As a consequence, by choosing $\e$ suitable small,  estimate \eqref{before gronwall} in Step 6 can be improved to
\begin{equation}\label{before gronwall new}
\frac{d}{dt}\int_\Omega |\vr|^2\dx + \nu \|\curl^{r+1}\vv\|^2_{L^2(\Omega)} \leq C_1\Big(1+\|u\|_{H^{r+1}(\Omega)}\Big)\|\vv\|^2_{H^r(\Omega)} + C_2 \nu\|u\|^2_{H^{r+1}(\Omega)},
\end{equation}
which does not contain the cubic terms in $\|\vv\|_{H^r}$.

From here, the usual Gronwall inequality yields
\begin{align}\label{Gronwall}
\|\vv (t,\cdot)\|^2_{H^r(\Omega)} &\leq \|\vv(0,\cdot)\|^2_{H^r(\Omega)} \exp \Big\{C_1 t+\int_0^t \|u(s,\cdot)\|_{H^{r+1}(\Omega)}\,{\rm d}s \Big\} \nonumber\\
&\quad + C_2\nu \int_0^t \Big\{ \exp\Big(C(t-s) + \int_s^t \|u(\tau,\cdot)\|_{H^{r+1}(\Omega)} \,{\rm d}\tau \Big) \|u(s,\cdot)\|^2_{H^{r+1}(\Omega)} \Big\} \,{\rm d}s \nonumber\\
&= C_2\nu \int_0^t \Big\{ \exp\Big(C(t-s) + \int_s^t \|u(\tau,\cdot)\|_{H^{r+1}(\Omega)} \,{\rm d}\tau \Big) \|u(s,\cdot)\|^2_{H^{r+1}(\Omega)} \Big\} \,{\rm d}s
\end{align}
for any $t \in [0, T_\star)$.
This is because $\vv(0,\cdot)=0$, since the Navier-Stokes and the Euler solutions have the same initial data.
Thus, for some constant $C_6=C_6(\nu^0, \|\two\|_{C^{r}(\Omega)}, T_\star, \|u\|_{L^2(0,T_\star; H^{r+1}(\Omega))})$,
\begin{equation}\label{vanishing viscosity: the final estimate}
\sup_{t \in [0, T_\star)}\|\vv(t,\cdot)\|_{H^r(\Omega)} \leq C_6 \sqrt{\nu} \longrightarrow 0 \qquad \text{ as } \nu \rightarrow 0^{+}.
\end{equation}

Finally, integrate Eq. \eqref{before gronwall new} with Eq.  \eqref{Gronwall} substituted into the right-hand side.
In this way, we find a constant $C_7$ with the same dependence as $C_6$ such that
\begin{equation}
\sup_{t \in [0, T_\star)}\int_{0}^t \|\curl^{r+1}\,\vv(s,\cdot) \|^2_{L^2(\Omega)} \,{\rm d} s \leq C_7.
\end{equation}
Therefore, in view of Theorem \ref{theorem: div-curl estimate} and Eq. \eqref{vanishing viscosity: the final estimate}, we have
\begin{equation}
\sup_{t \in [0, T_\star)}\int_{0}^t \|\vv(s,\cdot) \|^2_{H^{r+1}(\Omega)} \,{\rm d} s \leq C_8=C_8(\nu^0, \|\two\|_{C^{r}(\Omega)}, T_\star, \|u\|_{L^2(0,T_\star; H^{r+1}(\Omega))}),
\end{equation}
which completes the proof.
\end{proof}
\end{remark}

\begin{remark}
Combining the results in \S {\rm 4}--\S {\rm 5} together,
we have established the existence of strong solutions in $H^{r+1}$ for $r>\frac{5}{2}$ of the Navier-Stokes equations,
while the inviscid limit has been proved in $H^r$.
Therefore, it remains an open question whether the inviscid limit holds or fails {\rm (}{\it e.g.}, due to the development of boundary layers{\rm )}
in $H^{r+1}$, {\it i.e.} the highest order the spatial regularity of the strong solutions.
\end{remark}

\section{Remarks on the Non-Navier Slip-type Boundary Condition}

In the introduction (\S 1), a modified version of the Navier boundary condition, which is originally introduced
by Bardos \cite{bardos} and Solonnikov-\u{S}\u{c}adilov \cite{solonnikov},
has been briefly discussed.
Physically, it describes the phenomenon that the tangential part of the normal vector field of the Cauchy stress tensor
is uniformly vanishing, and it agrees with the Navier boundary condition if and only if boundary $\po$ is flat.
Together with the kinematic boundary condition, we have
\begin{equation}\label{bc for ns}
u^\nu \cdot \n =0, \quad
\omega^\nu \times \n =0 \qquad\, \text{ on } \po.
\end{equation}
The second line is referred to as
the non-Navier slip-type boundary condition.

In  Beir\~{a}o da Veiga-Crispo \cite{v3,v4},
the inviscid limit problem is analyzed for the Navier-Stokes equations subject to the boundary conditions  \eqref{bc for ns}
and the Euler equations subject to the no-penetration boundary condition  \eqref{boundary condition for Euler}.
In this section, we write $K$ for the {\em Gauss curvature} of surface $\p\Omega$.
We first introduce the following notions (see also Definitions 2.1 and 2.3 in \cite{v4}):

\begin{definition}\label{definition of admissible data, Beirao da Veiga}
For the non-Navier slip-type boundary conditions \eqref{bc for ns}, we say that
\begin{enumerate}
\item[\rm (i)]
$u_0\in C^\infty(\Omega; \R^3)$ is an admissible initial data if $\na \cdot u_0 =0$ in the closure $\overline{\Omega}$,
as well as $u_0 \cdot \n =0$ and $\omega_0 \times \n=0$ on $\po${\rm ;}
\item[\rm (ii)]
The inviscid limit $u^\nu \rightarrow u$ holds ``strongly'' in $L^p([0,T]; W^{s,q}(\Omega; \R^3))$ for some  $T>0$, $p,q \geq 1$, and $s>1$,
if the convergence holds with respect to the strong topology on $L^p([0,T]; W^{s,q}(\Omega; \R^3))$.
\end{enumerate}
\end{definition}

In particular, we notice that, if $u^\nu \rightarrow u$ strongly, $\omega^\nu \times \n =0$ on $[0,T] \times \po$
implies $\omega \times \n =0$ on $[0,T] \times \po$.
This is termed as the ``{\em persistence property}'' in \cite{v3, v4}.
In the presence of such a property, the following non-convergence result is established by Beir\~{a}o da Veiga-Crispo,
first by  considering a special example on $\mathbb{S}^2$ in \cite{v3} and
then proved in full generality via computations of the principal curvatures on $\po$ in local coordinates in \cite{v4}:

\begin{theorem}[Beir\~{a}o da Veiga-Crispo, \cite{v3, v4}]\label{thm: veiga-crispo}
Let $\Omega\subset \R^3$ be a bounded regular domain.
Let the admissible initial data $u_0$ be given for the initial-boundary value problem \eqref{NS equation} and
\eqref{bc for ns}
and problem \eqref{Euler}--\eqref{boundary condition for Euler}
such that the following condition holds{\rm :}
\begin{equation}\label{geometric condition}
\oo_0(x_0) \neq 0 \qquad \text{ for some } x_0 \in \p\Omega \text{ such that } K(x_0) \neq  0.
\end{equation}
Then, for arbitrary $\delta>0$, $p, q \geq 1$ and $s>1$, the ``strong'' inviscid limit fails{\rm :}
\begin{equation}
 u^\nu \nrightarrow u \qquad \text{ in } L^p([0,\delta]; W^{s,q}(\Omega; \R^3)).
\end{equation}
\end{theorem}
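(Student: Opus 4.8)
The plan is to argue by contradiction and reduce the assertion to a purely local geometric incompatibility at the point $x_0$. Suppose that, for some $\delta>0$, $p,q\geq 1$, and $s>1$, we had $u^\nu\to u$ strongly in $L^p([0,\delta];W^{s,q}(\Omega;\R^3))$. Since $s>1$, the first-order derivatives converge in $L^p([0,\delta];W^{s-1,q})$ with $s-1>0$, so the boundary traces of $\omega^\nu=\curl\,u^\nu$ converge to that of $\omega=\curl\,u$ on $\po$. As the non-Navier slip-type condition \eqref{bc for ns} forces $\omega^\nu\times\n=0$ on $[0,\delta]\times\po$ for every $\nu>0$, the limit inherits the \emph{persistence property} $\omega\times\n=0$ on $[0,\delta]\times\po$. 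It therefore suffices to prove that the strong Euler solution $u$ issuing from the admissible datum $u_0$ \emph{cannot} keep $\omega\times\n$ identically zero on any time interval $[0,\delta]$ once \eqref{geometric condition} holds.

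To this end I would localize near $x_0$ and fix an orthonormal moving frame $\{e_1,e_2,e_3\}$ adapted to $\po$, with $e_3=\n$ and $e_1,e_2$ along the principal directions, so that the shape operator $\shape$ is diagonalized by the principal curvatures $\kappa_1,\kappa_2$ and $K(x_0)=\kappa_1\kappa_2$. In this frame $\omega\times\n=0$ is equivalent to the vanishing of the two tangential components $\omega^1=\omega^2=0$ on $\po$. Differentiating this identity in $t$ and substituting the Euler vorticity equation $\partial_t\omega=(\omega\cdot\na)u-(u\cdot\na)\omega$ produces a hierarchy of boundary constraints. Reducing the normal derivatives exactly as in \S\,2 — so that $\na_3 u^\alpha=\na_\alpha u^3=-\two_{\alpha\beta}u^\beta$ on $\po$ whenever $\omega^\alpha=0$, while $\na_\beta\omega^\alpha=-\two_{\beta\alpha}\omega^3$ — together with $\na\cdot u=0$, one checks that the \emph{first}-order constraint $\partial_t\omega^\alpha|_{\po}=0$ is satisfied automatically: the contributions of $(\omega\cdot\na)u$ and $(u\cdot\na)\omega$ to each tangential component coincide, and the curved geometry agrees with the flat one to this order.

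The obstruction thus appears only at second order, which is the crux. Differentiating $\partial_t\omega^\alpha|_{\po}=0$ once more in time introduces $\partial_t u=-(u\cdot\na)u-\na p$, so a Hessian of the pressure enters; on $\po$ this is tamed by the normal Euler momentum balance, which for $u\cdot\n=0$ reduces $\partial_\n p$ to the curvature quantity $\two(u,u)$. Eliminating the remaining normal derivatives and commutators in favour of tangential data of $u_0$ and of $\omega_0^3$ (through the incompressibility identities \eqref{identities for na 3} and the Ricci identity), I expect every term linear in a single principal curvature to cancel, leaving a residual source proportional to $\kappa_1\kappa_2\,\omega_0^3=K(x_0)\,\omega_0^3(x_0)$ at $(0,x_0)$. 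Since $K(x_0)\neq 0$ and $\omega_0$ is purely normal on $\po$ with $\omega_0^3(x_0)\neq 0$, this residual is nonzero, so $\partial_t^2(\omega\times\n)|_{t=0}(x_0)\neq 0$; hence $\omega\times\n$ cannot vanish for small $t>0$, contradicting persistence and completing the proof.

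The hardest part is precisely this second-order bookkeeping: verifying that the pressure-Hessian contributions and all terms linear in a single principal curvature cancel, so that only the Gauss-curvature combination $K=\kappa_1\kappa_2$ survives rather than, say, the mean curvature $\kappa_1+\kappa_2$. It is exactly this delicate cancellation that is carried out in local coordinates via the principal-curvature computation in \cite{v4}, with the constant-curvature sphere $\mathbb{S}^2$ of \cite{v3} (where $\kappa_1=\kappa_2$) serving as the guiding special case.
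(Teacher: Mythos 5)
Your opening reduction is sound and matches the paper's: strong convergence with $s>1$ passes the trace condition $\omega^\nu\times\n=0$ to the limit, so it suffices to show that the Euler solution cannot keep $\omega\times\n\equiv 0$ near $t=0$; both you and the paper then differentiate this constraint in time using the vorticity equation. But your execution fails at the very first step. You claim that the first-order constraint $\partial_t\omega^\alpha|_{\po}=0$ at $t=0$ is satisfied automatically because the tangential contributions of $(u\cdot\na)\omega$ and $(\omega\cdot\na)u$ cancel, and you base this on the two reductions $\na_3 u^\alpha=\na_\alpha u^3=-\two_{\alpha\beta}u^\beta$ and $\na_\beta\omega^\alpha=-\two_{\beta\alpha}\omega^3$, written with the \emph{same} sign. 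Those signs are in fact opposite, and the relative sign is convention-independent: differentiating $\langle u,\n\rangle=0$ along a tangential direction gives $\langle \na_\alpha u,\n\rangle=-\langle u,\na_\alpha \n\rangle$ (the normal component of the tangential derivative of a \emph{tangent} field carries $\na\n$ acting on $u$ with one sign), whereas differentiating $\omega=\omega^3\n$ along a tangential direction gives $\pi(\na_\beta\omega)=\omega^3\na_\beta\n$ (the tangential part of the derivative of a \emph{normal} field carries $\na\n$ with the opposite sign). Consequently the two terms in the vorticity equation reinforce rather than cancel, and on $\po$ at $t=0$ one finds
\begin{equation*}
\partial_t\big(\pi(\omega)\big)\big|_{t=0}\;=\;\pm\,2\,\omega_0^3\,\shape(u_0),
\end{equation*}
which you can verify directly on the unit sphere, where $\shape=\pm\,\mathrm{Id}$ and both contributions equal $\mp\,\omega_0^3\,u_0$. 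This is exactly the paper's first-order obstruction: the paper identifies $(u_0\cdot\na)\omega_0-(\omega_0\cdot\na)u_0$ with the Lie derivative $\lie_{u_0}\omega_0$ and shows that its tangential part is nonzero because transporting the normal field $\omega_0$ along the curved boundary by the flow of the tangent field $u_0$ necessarily tilts it. The contradiction therefore already occurs at first order; there is nothing that needs to be extracted at second order.

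This sign error invalidates the remainder of your program. Your predicted second-order residual proportional to $K(x_0)\,\omega_0^3(x_0)$ is not what occurs: the true obstruction $2\,\omega_0^3\,\shape(u_0)$ is \emph{linear} in the principal curvatures, not quadratic, and the hypothesis $K(x_0)\neq 0$ enters only through the invertibility of the shape operator at $x_0$, which guarantees $\shape(u_0)\neq 0$ whenever $u_0\neq 0$; if $u_0(x_0)=0$, one notes via Stokes' theorem that the nonzero flux of $\omega_0\cdot\n$ through small disks around $x_0$ forces nonzero circulation, hence $u_0\neq 0$ at points arbitrarily close to $x_0$, where the same first-order obstruction applies. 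Finally, even granting your (incorrect) first-order cancellation, the crux of your proposal --- the claimed cancellation of the pressure-Hessian terms and of all terms linear in a single principal curvature at second order --- is explicitly left unverified (you only state that you \emph{expect} it), so the proposal is incomplete precisely where its difficulty lies.
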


This theorem says that, if the initial vorticity vanishes somewhere on the curved part of the boundary,
{\it i.e.} the Gauss curvature is non-vanishing at this point,
then the ``strong'' inviscid limit fails in an arbitrarily short time interval,
so that  the Prandtl boundary layers must be developed.
Heuristically, this is due to the incompatibility of the vorticity directions of the slip-type boundary conditions
in the limiting process $\nu \rightarrow 0^{+}$.

Now we give an alternative proof of Theorem \ref{thm: veiga-crispo}, which avoids the computations in local coordinates
on $\po$ as in \cite{v3, v4}.
This offers a new, global perspective for the above theorem,
and the proof makes essential use of the properties of Lie derivatives in $\R^3$.

\begin{proof}
First of all, following the original proof
in \cite{v3, v4}, we consider the inviscid vorticity equation,
which is obtained by taking the curl of the Euler equation \eqref{Euler}:
\begin{equation}
\p_t \omega + (u \cdot \na)\omega - (\omega \cdot \na)u = 0.
\end{equation}
Then taking the cross product with the outer unit normal $\n: \po \rightarrow \mathbb{S}^2$ leads to
\begin{equation}\label{eq: evolution for omega times n}
\p_t(\omega \times \n) +\big\{(u\cdot\na) \omega -(\omega \cdot\na) u\big\}\times \n = 0.
\end{equation}
It is observed by Xiao-Xin ({\it cf.} Corollary 8.3 in \cite{xin1}) that a necessary condition
for the ``strong'' inviscid limit in the time interval $(0,\delta)$ is:
\begin{equation}
\omega \times \n \equiv 0 \qquad \text{ on } (0,\delta)\times \po,
\end{equation}
that is, the persistence property is verified ({\it cf.} Definition \ref{definition of admissible data, Beirao da Veiga}).
Thus, if the ``strong'' inviscid limit were valid, then  Eq. \eqref{eq: evolution for omega times n}  implies that
$\{(u\cdot\na) \omega -(\omega \cdot\na) u\}\times \n = 0$ for all time.
In particular, sending $t \rightarrow 0^{+}$, the following condition must be fulfilled:
\begin{equation}\label{eq for u0 and w0}
\big\{(u_0\cdot\na) \omega_0 -(\omega_0 \cdot\na) u_0\big\}\times \n = 0 \qquad \text{ on } \po.
\end{equation}

Our crucial observation is that the expression in the bracket in \eqref{eq for u0 and w0}
coincides
with the Lie bracket of the vector fields $u_0, \omega_0 \in T\R^3$:
\begin{equation}\label{express as lie bracket}
(u_0\cdot\na) \omega_0 -(\omega_0 \cdot\na) u_0 = [u_0, \oo_0].
\end{equation}

To prove Eq. \eqref{express as lie bracket}, let $V=\sum_{i=1}^3 V^i\p_i$ and $W=\sum_{j=1}^3 V^j\p_j$ be two smooth vector fields
in $T\R^3$, where $\{\p_1,\p_2, \p_3\}$ denotes the canonical Euclidean frame.
We follow the convention in differential geometry to identify vector fields with first-order differential operators.
Thus, the Lie bracket of $V$ and $W$ can be computed as
\begin{align*}
[V,W] = VW-WV
= \Big(V^i\p_i (W^j\p_j) - W^j\p_j(V^i\p_i) \Big)
=: (V\cdot\na) W - (W \cdot \na) V,
\end{align*}
which verifies the above identity. In the above, the Einstein summation convention is adopted.

To proceed, we recall that the Lie bracket of two vector fields on a differentiable manifold
equals the {\em Lie derivative} (denoted by $\lie$) of one vector field along the other:
\begin{equation}\label{lie bracket and lie derivative}
[u_0, \oo_0] = \lie_{u_0} \omega_0.
\end{equation}
This result is standard in the differentiable manifold theory, which can be found in
the classical texts ({\it cf.}
do Carmo \cite{docarmo}).
Furthermore, the  Lie derivative at a point $x_0 \in \p\Omega$ is given by
\begin{equation}\label{def for Lie derivative}
\lie_{u_0}\omega_0 (x_0):= \lim_{s \rightarrow 0^{+}} \frac{(\theta_{s})^{\ast} \{\omega_0({\theta_s(x_0)})\}- \omega_0(x_0)}{s},
\end{equation}
where $\{\theta_s(x): s\geq 0, x\in \R^3\}$ is the one-parameter subgroup defined by the following ODE:
\begin{equation}\label{ODE for integral flow}
\begin{cases}
\frac{d}{ds} \theta_s(x) = u_0(\theta_s(x))\qquad \text{ for all } s \geq 0, \: x \in \R^3,\\
\theta_0(x) = x \qquad\qquad\quad\quad\,\, \text{ for all } x \in \R^3.
\end{cases}
\end{equation}
In other words, trajectory $\{\theta_s(x)\}_{s \geq 0}$ is the {\em integral curve} of the vector field $u_0$ emanating
from point $x$.
Also,
$(\theta_s)^\ast$ denotes the pullback operation under map $\theta_s$.

By the admissibility of the initial data,
$\oo_0 \in T(\po)^\perp$ is orthogonal to the boundary
since $\oo_0 \times \n =0$,
and $u_0 \in T(\po)$ is tangential to the boundary because of the kinematic boundary condition: $u_0 \cdot \n = 0$.
The expression on the right-hand side of Eq. \eqref{lie bracket and lie derivative} is well-defined
since $\oo_0$ is a vector field defined along the manifold $\po$.
In geometric terminologies, it means that $\oo_0 \in \iota^*T\R^3$,
where $\iota: \po \emb \R^3$ is the embedding of Riemannian submanifold,
and $\iota^*T\R^3$ is the pullback vector bundle.
This enables us to take the Lie derivative on $\oo_0$ along any vector field ({\it e.g.} $u_0$)
tangent to $\po$.

Now, by the assumptions, there is a point $x_0 \in \po$ such that $\oo_0(x_0) \neq 0$ and $K(x_0) \neq 0$.
Owing to the non-vanishing curvature, there exists some small neighbourhood $U \subset \po$ of $x_0$
such that every smooth curve $\gamma: (-\delta, \delta) \rightarrow \po$ satisfying $\gamma(0)=x_0$ is not a straight line
segment in $\R^3$.
In addition, as vorticity $\oo_0$ is non-vanishing at $x_0$, we have
\begin{equation}
\langle u_0, \dot{\gamma} \rangle\neq 0 \qquad \text{ on } U.
\end{equation}

On the other hand, using the definition of the Lie derivative in terms of the integral curve, {\it i.e.} \eqref{def for Lie derivative},
we have
\begin{equation}
\lie_{\dot{\gamma}} \omega_0 \neq 0 \qquad \text{ on } T(\po).
\end{equation}
This is because the parallel-transport of $\omega_0$ along $\gamma$ cannot be obtained by a Euclidean translation,
so that $(\theta_{s})^{\ast} \{\omega_0({\theta_s(x_0)})\} - \omega_0(x_0) \neq 0$ in Eq. \eqref{def for Lie derivative}.
Therefore, we conclude that $\lie_{\langle u_0, \dot{\gamma}\rangle \dot{\gamma}} \omega_0 \neq 0$ on $T(\po)$, from which it follows
\begin{equation}
\lie_{u_0}\oo_0 (x_0) \times \n(x_0) \neq 0 \qquad \text{ in } \R^3.
\end{equation}
This contradicts Eq. \eqref{eq for u0 and w0}.
\end{proof}

\smallskip

\noindent
{\bf Acknowledgement}.
The research of Gui-Qiang G. Chen  was supported in part by
the UK
Engineering and Physical Sciences Research Council Award
EP/E035027/1 and
EP/L015811/1, and the Royal Society--Wolfson Research Merit Award (UK).
The research of Siran Li was supported in part by the UK EPSRC Science and Innovation award
to the Oxford Centre for Nonlinear PDE (EP/E035027/1).
The research of Zhongmin Qian was supported in part by the ERC grant (ESig ID291244).

\smallskip

\end{document}